\newtheorem{theorem}{Theorem}[section]
\newtheorem{lemma}[theorem]{Lemma}
\newtheorem{corollary}[theorem]{Corollary}
\newtheorem{proposition}[theorem]{Proposition}
\theoremstyle{definition}
\newtheorem{definition}[theorem]{Definition}
\newtheorem{example}[theorem]{Example}
\theoremstyle{remark}
\newtheorem{remark}[theorem]{Remark}
\numberwithin{equation}{section}
\begin{document}

\title{Contractions of low-dimensional nilpotent Jordan algebras}

\author{J. M. Ancochea Berm\'{u}dez}
\address{Depto. Geometr\'{i}a y Topolog\'{i}a and IMI, Facultad de CC. Matem\'{a}ticas, Universidad Complutense de Madrid, Plaza de Ciencias, 3 28040 Madrid}
\thanks{E-mail: ancochea@mat.ucm.es; jfresan@estumail.ucm.es. The first author was supported in part by MTM 2006-09152 and CC607-UCM/ESP-2922.}

\author{J. Fres\'{a}n}

\author{J. Margalef Bentabol}

\subjclass[2000]{17C10, 17C55}

\date{}

\keywords{Jordan algebras, nilpotent, rigidity, contraction}

\begin{abstract}
In this paper we classify the laws of three-dimensional and four-dimensional nilpotent Jordan algebras over the field of complex numbers. We describe the irreducible components of their algebraic varieties and extend contractions and deformations among their isomorphism classes. In particular, we prove that $\mathcal{J}^2$ and $\mathcal{J}^3$ are irreducible and that $\mathcal{J}^4$ is the union of the Zariski closures of the orbits of two rigid Jordan algebras.
\end{abstract}

\maketitle

\section{Preliminaries}

Jordan algebras were introduced in 1934 in order to find a new mathematical setting for quantum mechanics. Since this foundational period, a complete structure theory has been developed from several points of view \cite{Mc}. However, apart from the Jordan-von Neumann-Wigner theorem on formally real algebras \cite{JNW} and Zel'manov's results on prime and simple Jordan algebras \cite{Zel}, the classification problem in a fixed dimension has been hardly studied and few explicit examples of Jordan algebras are known. In this paper we tackle the algebraic and geometric description of the varieties of three-dimensional and four-dimensional nilpotent Jordan algebras over the field of complex numbers. The structure of the work is as follows: we first define the notion of characteristic sequence and study its behaviour under contractions and deformations. In the next sections, three-dimensional and four-dimensional nilpotent Jordan algebras are classified according to the possible values of the characteristic sequence and some other invariants. We obtain diagrams summarizing the isomorphism classes, the contractions and deformations among them, the rigid Jordan algebras and the irreducible components of the varieties.

\begin{definition}
A complex Jordan algebra $J=(V, \varphi)$ consists of a complex vector space $V$ equipped with a symmetric bilinear mapping $\varphi: V \times V \longrightarrow V$ such that:
\begin{equation}
\varphi(\varphi(x,x), \varphi(x,y))=\varphi(x,\varphi(\varphi(x,x),y)) \quad x,y \in V.
\end{equation}
\end{definition}

\smallskip
Let $J=(V, \varphi)$ be a Jordan algebra. For any integer $m \in \mathbb{N}$ we define a lower central series as the descending chain of subalgebras
\begin{equation*}
\mathcal{C}^1(J)=J \supset \mathcal{C}^2(J)=\varphi(J, J)
\supset \ldots \mathcal{C}^{m+1}(J)=\varphi(\mathcal{C}^m(J),J)
\end{equation*}

\begin{definition}
A Jordan algebra $J=(V, \varphi)$ is said to be nilpotent if there exists an integer $k \in \mathbb{N}$ such that $\mathcal{C}^k(J)=\{0\}.$ The minimum $k$ for which this condition holds is the nilindex of $\varphi.$
\end{definition}

We recall that nilpotent Jordan algebras have non-trivial center, that is:  \begin{equation*}
Z(\varphi)=\{x \in V: \quad \varphi(x,y)=0 \quad \forall y \in V \}\neq \{0\}
\end{equation*}

Let $\{e_1,\ldots,e_n\}$ be a basis for $\mathbb{C}^n.$ It is possible to identify Jordan algebras with its structure constants over this basis, that is, to consider them in bijection with the set of symmetric tensors $(a_{ij}^k)$ whose coordinates satisfy the system of homogeneous polynomial constraints
\begin{equation}
a_{ii}^h a_{kj}^l a_{lk}^r - a_{ii}^h a_{jk}^l a_{lh}^r -2a_{ij}^h a_{ik}^l a_{hl}^r + 2a_{il}^r a_{hj}^l a_{ik}^h = 0, \quad 1\leq h,i,j,k,l,r\leq n.
\end{equation}

Nilpotency conditions are also of polynomial type and thus the set of nilpotent Jordan algebras in dimension $n,$ which will be denoted by $\mathcal{J}^n,$ possess a structure of algebraic variety embedded in $\mathbb{C}^{n^3}.$

Let $J=(\mathbb{C}^n, \varphi)$ be a nilpotent Jordan algebra. For any $x \in  \mathbb{C}^n,$ there is a natural way to define an endomorphism $L_x$ analogue to the adjoint representation:
\begin{equation*}
L_x(y)=\varphi(x,y) \quad \forall y \in \mathbb{C}^n
\end{equation*}

\smallskip
It is straightforward to prove that $L_x$ is nilpotent. In \cite{AnGo}, the concept of cha-racteristic sequence of a Lie algebra was introduced. This notion is formally still valid for Jordan algebras. If $x$ is a non-zero
vector not belonging to $\mathcal{C}^2(J),$ then let
$s_\varphi(x)=(s_1(x), \ldots, s_k(x))$ be the ordered descending sequence of dimensions of the Jordan blocks of $L_x.$ If we denote by $s(\varphi)$ the least
upper bound for the lexicographic order of the set $\{s_\varphi(x): \
x \in J - \mathcal{C}^2(J)\},$ then $s(\varphi)$ is an invariant of the
isomorphism class of $J$ called characteristic sequence. A
vector $x \in J - \mathcal{C}^2(J)$ is said to be a characteristic vector if
$s_\varphi(x)=s(\varphi).$

\subsection{Rigid Jordan algebras}
If $\varphi_0$ is a nilpotent Jordan algebra, let $\mathcal{O}(\varphi_0)$ be its orbit under the action of the general linear group $GL(n,\mathbb{C}):$
\begin{equation*}
\begin{array}{rcl}
GL(n,\mathbb{C}) \times \mathcal{J}^n& \longrightarrow &\mathcal{J}^n \\ (f, \varphi_0)
&\longmapsto & f^{-1}(\varphi_0(f(x),f(y)))
\end{array}
\end{equation*}

\smallskip
Let $C$ be an irreducible component of $\mathcal{J}^n$ which contains $\varphi_0.$ Then $\mathcal{O}(\mu_0) \subseteq C.$ If we endow the variety with the Zariski topology, then $C$ is closed, so the closure for this topology $\overline{\mathcal{O}(\mu_0)}^Z$ is also contained in $C.$

\begin{definition}
A Jordan algebra $\varphi$ is said to be rigid if $\mathcal{O}(\varphi)$ is Zariski open. In the same way, a family $\{\varphi_\alpha\}$ is rigid if the union of the orbits is Zariski open.
\end{definition}

Let $\varphi$ be a Jordan algebra and $f$ an automorphism, the image of the coboundaries
\begin{equation*}
\delta_\varphi f(x,y)=\varphi(f(x),y)+\varphi(x,f(y))-f(\varphi(x,y))
\end{equation*}
defines a structure of tangent space to the orbit of the algebra:
\begin{equation*}
T_\varphi\mathcal{O}(\varphi)=\{\delta_\varphi f \quad f \in GL(n,\mathbb{C})\}
\end{equation*}
This way, it is possible to assign $\mathcal{O}(\varphi)$ the vectorial dimension of its tangent space.

\begin{example}
Let $\{e_1,e_2,e_3\}$ be a basis for $\mathbb{C}^3.$ We compute $\dim \mathcal{O}(\varphi)$ for the Jordan algebra defined by the products $\varphi(e_1,e_1)=e_2,$ $\varphi(e_1,e_2)=e_3.$ Taking
\begin{equation*}
f=\left( \begin{matrix}
a_1 & b_1 & c_1 \\ a_2 & b_2 & c_2 \\ a_3 & b_3 & c_3 \\ \end{matrix} \right) \in GL(3,\mathbb{C})
\end{equation*} a generic automorphism we obtain the coboundaries:
\begin{align*}
&\delta_\varphi f(e_1,e_1)=-b_1e_1+(2a_1-b_2)e_2+(2a_2-b_3)e_3 \\
&\delta_\varphi f(e_1,e_2)=-c_1e_1+(b_1-c_2)e_2+(a_1+b_2-c_3)e_3 \\
\delta_\varphi f(e_1,e_3)&=c_1e_2+c_2e_3 \quad
\delta_\varphi f(e_2,e_2)=2b_1e_3 \quad
\delta_\varphi f(e_2,e_3)=c_1e_3,
\end{align*} which depend on seven parameters. Thus, $\dim \mathcal{O}(\varphi)=7.$
\end{example}

\begin{proposition}\cite{AnRu}, \cite{Ver}.
Let $J=(\mathbb{C}^n, \varphi)$ be a nilpotent Jordan algebra and $\mathrm{Der}(\varphi)$ its algebra of derivations. Then:
\begin{equation*}
\dim \mathcal{O}(\varphi)=n^2-\dim \mathrm{Der}(\varphi)
\end{equation*}
\end{proposition}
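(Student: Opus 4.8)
The plan is to realize $\mathcal{O}(\varphi)$ as the image of the orbit map and to apply the orbit--stabilizer principle in its infinitesimal form. Consider the morphism of algebraic varieties
\[
\Phi : GL(n,\mathbb{C}) \longrightarrow \mathcal{J}^n, \qquad \Phi(f) = f^{-1}\bigl(\varphi(f(\cdot),f(\cdot))\bigr),
\]
whose image is precisely $\mathcal{O}(\varphi)$. Since we work over $\mathbb{C}$, an algebraically closed field of characteristic zero, and $GL(n,\mathbb{C})$ is a linear algebraic group acting morphically, the orbit $\mathcal{O}(\varphi)$ is a smooth locally closed subvariety whose dimension coincides with that of its tangent space at $\varphi$. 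It therefore suffices to compute $\dim T_\varphi \mathcal{O}(\varphi)$.

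First I would identify this tangent space with the image of the differential $d\Phi_{\mathrm{id}}$. Tangent vectors to $GL(n,\mathbb{C})$ at the identity are arbitrary endomorphisms $f \in \mathfrak{gl}(n,\mathbb{C}) = \mathrm{End}(\mathbb{C}^n)$, a space of dimension $n^2$. Differentiating $\Phi$ along the curve $t \mapsto \mathrm{id} + tf$ and comparing with the description of the tangent space to the orbit recalled above, one obtains exactly $d\Phi_{\mathrm{id}}(f) = \delta_\varphi f$, so that $T_\varphi \mathcal{O}(\varphi) = \{\delta_\varphi f : f \in \mathfrak{gl}(n,\mathbb{C})\}$, in agreement with the formula stated before the proposition.

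Next I would apply the rank--nullity theorem to the linear map $\delta_\varphi \colon \mathfrak{gl}(n,\mathbb{C}) \to \mathcal{J}^n$ sending $f$ to $\delta_\varphi f$. Its kernel consists of those $f$ satisfying $\varphi(f(x),y) + \varphi(x,f(y)) = f(\varphi(x,y))$ for all $x,y$, which is precisely the defining condition of a derivation; hence $\ker \delta_\varphi = \mathrm{Der}(\varphi)$. Consequently
\[
\dim \mathcal{O}(\varphi) = \dim T_\varphi \mathcal{O}(\varphi) = \dim \operatorname{Im} \delta_\varphi = n^2 - \dim \ker \delta_\varphi = n^2 - \dim \mathrm{Der}(\varphi).
\]

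The step I expect to be the main obstacle is the first reduction: justifying that $\dim \mathcal{O}(\varphi)$ equals the dimension of its tangent space, that is, that the orbit is smooth and that $T_\varphi \mathcal{O}(\varphi)$ is genuinely the \emph{image} of $d\Phi_{\mathrm{id}}$ rather than merely containing it. This rests on the smoothness of orbits of algebraic group actions in characteristic zero and on the surjectivity of the differential of the orbit map onto the tangent space of the orbit. Once this is granted, the identification $\ker \delta_\varphi = \mathrm{Der}(\varphi)$ and the rank--nullity bookkeeping are entirely routine.
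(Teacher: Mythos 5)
Your argument is correct and coincides with the approach the paper takes: the proposition is stated there without proof (it is quoted from \cite{AnRu} and \cite{Ver}), but the paragraph preceding it already identifies $T_\varphi\mathcal{O}(\varphi)$ with the image of the coboundary map $\delta_\varphi f$, and the cited references establish the formula exactly by your route, namely smoothness and local closedness of orbits of algebraic groups in characteristic zero plus rank--nullity applied to $\delta_\varphi$ with $\ker\delta_\varphi=\mathrm{Der}(\varphi)$. The only cosmetic slip is writing $\delta_\varphi\colon\mathfrak{gl}(n,\mathbb{C})\to\mathcal{J}^n$: the codomain should be the vector space of symmetric bilinear maps $\mathbb{C}^n\times\mathbb{C}^n\to\mathbb{C}^n$, since $\mathcal{J}^n$ is a variety rather than a linear space, but this does not affect the dimension count.
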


The geometric meaning of rigidity may be described by introducing a notion of limit in the variety $\mathcal{J}^n.$ Let $\varphi_0$ be a nilpotent Jordan
algebra and let $f_t \in GL(n,\mathbb{C})$ be a family of automorphisms depending on a continuous parameter $t.$ If the limit
\begin{equation}
\varphi(x,y)=\lim_{t\rightarrow 0}
f_{t}^{-1}(\varphi_0(f_{t}(x),f_{t}(y))) \label{lim}
\end{equation} exists for all $x, y \in \mathbb{C}^{n},$ then $\varphi$ is the contraction of $\varphi_0$ by $\{f_t\}.$ It is worthwhile to mention that properties defined by polynomial identities, such as nilpotency, associativity or commutativity, are all preserved by contraction.

\begin{example} \label{ex1}
Let $\{e_1,e_2,e_3\}$ be a basis for $\mathbb{C}^3,$ and $\varphi_0$ the Jordan algebra defined by $\varphi_0(e_1,e_1)=e_2$ and $\varphi_0(e_1,e_2)=e_3.$ If we consider the automorphism
\begin{equation*}
f_t(e_1)=te_1 \ \quad f_t(e_2)=t^2e_2 \quad f_t(e_3)=e_3,
\end{equation*} the only non-zero products in the transformed basis $x_i=f_t(e_i), \ i=1,2,3,$ are
\begin{equation*}
\varphi_0(x_1,x_1)=x_2, \quad \varphi_0(x_1,x_2)=t^3x_3.
\end{equation*} It is immediate that (\ref{lim}) holds for $e_1,e_2,e_3.$ In the limit, an algebra isomorphic to $\varphi(x_1,x_1)=x_2$ is obtained, so $\varphi$ is a contraction of $\varphi_0.$
\end{example}

Using the action of the general linear group over the variety, it is easy to prove that a contraction of $\varphi_0$ corresponds to a closure point of the orbit $\mathcal{O}(\varphi_0),$ so every irreducible component containing $\varphi_0$ contains also all the contractions of $\varphi_0.$ The change of basis $x_i=te_i,$ $i=1\ldots n,$ induces a contraction of every nilpotent Jordan algebra over the Abelian algebra. Moreover, for every nontrivial contraction $\varphi_0\longrightarrow \varphi,$ the following inequalities hold
\begin{equation} \label{ineq}
s(\varphi) \leq s(\varphi_0), \quad \dim \mathcal{O}(\varphi) < \dim \mathcal{O}(\varphi_0), \quad \dim Z(\varphi) \geq \dim Z(\varphi_0).
\end{equation}
It follows that rigid algebras cannot be obtained as a contraction of other non-isomorphic laws in $\mathcal{J}^n$ \cite{Ni}. Indeed, the Zariski closure of the orbit of a rigid Jordan algebra is an irreducible component of the variety.

\begin{proposition} \label{prop}
Let $\mathcal{J}^n$ be the variety of nilpotent Jordan algebras. The relation $\varphi_0 \longrightarrow \varphi$ is an order relation in $\mathcal{J}^n.$
\end{proposition}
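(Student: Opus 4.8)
The plan is to verify the three defining properties of a partial order---reflexivity, antisymmetry and transitivity---for the contraction relation, understood on isomorphism classes (equivalently, on $GL(n,\mathbb{C})$-orbits). Reflexivity is immediate: taking the constant family $f_t=\mathrm{Id}$ in (\ref{lim}) yields $\varphi_0 \longrightarrow \varphi_0$. The substantive content lies in transitivity and antisymmetry.

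Throughout, I would first record the reformulation already implicit in the excerpt. Since a contraction of $\varphi_0$ is a closure point of $\mathcal{O}(\varphi_0)$ and the set $\overline{\mathcal{O}(\varphi_0)}^Z$ is invariant under the $GL(n,\mathbb{C})$-action, the relation $\varphi_0 \longrightarrow \varphi$ is equivalent to the inclusion of Zariski closures $\overline{\mathcal{O}(\varphi)}^Z \subseteq \overline{\mathcal{O}(\varphi_0)}^Z$; the forward implication is the closure-point statement recalled above, while for the reverse one uses that, over $\mathbb{C}$, any point in the closure of an orbit arises as a one-parameter contraction. In particular the relation is well defined on isomorphism classes, as orbit closures depend only on the class of the representative. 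Granting this equivalence, transitivity reduces to the transitivity of set inclusion: from $\overline{\mathcal{O}(\varphi_2)}^Z \subseteq \overline{\mathcal{O}(\varphi_1)}^Z \subseteq \overline{\mathcal{O}(\varphi_0)}^Z$ one reads off $\varphi_0 \longrightarrow \varphi_2$.

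Antisymmetry is where the inequalities (\ref{ineq}) enter decisively. I would argue by contradiction: suppose $\varphi_0 \longrightarrow \varphi$ and $\varphi \longrightarrow \varphi_0$ but $\varphi \not\cong \varphi_0$. Then both contractions are nontrivial, so the strict inequality in (\ref{ineq}) applies to each, giving $\dim \mathcal{O}(\varphi) < \dim \mathcal{O}(\varphi_0)$ and $\dim \mathcal{O}(\varphi_0) < \dim \mathcal{O}(\varphi)$ simultaneously, which is impossible. Hence $\varphi \cong \varphi_0$, which is exactly antisymmetry at the level of isomorphism classes. (Alternatively, equality of the two orbit closures forces equality of the orbits, since each orbit is locally closed, hence open and dense in its irreducible closure, and two dense open subsets of an irreducible variety must meet.)

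The main obstacle is the transitivity step if one refuses the orbit-closure reformulation and insists on producing an explicit one-parameter family realizing $\varphi_0 \longrightarrow \varphi_2$ directly from families $\{f_t\}$ and $\{g_t\}$ realizing $\varphi_0 \longrightarrow \varphi_1$ and $\varphi_1 \longrightarrow \varphi_2$. The composite $h_t = f_{a(t)}\,g_t$ transforms $\varphi_0$ into $g_t^{-1}\Psi_{a(t)}(g_t\,\cdot\,,g_t\,\cdot\,)$, where $\Psi_s \to \varphi_1$ as $s\to 0$; the difficulty is that the $g_t$-conjugation amplifies coordinates without bound, so one must choose the reparametrization $a(t)\to 0$ fast enough---depending on the operator norm of the $g_t$-conjugation at each $t$---to force the error $\Psi_{a(t)}-\varphi_1$ to vanish in the limit. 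This diagonal argument is routine but delicate, and is precisely what the passage to Zariski orbit closures lets one bypass.
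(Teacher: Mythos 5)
Your proposal is correct, and since the paper states this proposition with no proof at all, your argument is exactly the one its surrounding discussion intends: reflexivity via the constant family, transitivity via the $GL(n,\mathbb{C})$-invariance of Zariski orbit closures, and antisymmetry (which, as you rightly note, only holds at the level of isomorphism classes) from the strict inequality $\dim \mathcal{O}(\varphi) < \dim \mathcal{O}(\varphi_0)$ of (\ref{ineq}), or alternatively from orbits being open in their irreducible closures. Your attention to the one nontrivial point --- that a Zariski closure point of an orbit must actually be realized as a one-parameter limit in the sense of (\ref{lim}) if one insists on the paper's definition of contraction, handled either by the standard curve-selection fact over $\mathbb{C}$ or by your diagonal reparametrization $h_t = f_{a(t)}g_t$ with $a(t)\to 0$ chosen against the norms of $g_t^{\pm 1}$ --- supplies precisely the detail the paper glosses over.
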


\smallskip

Another possible approach to the geometric interpretation of rigidity, in some way dual to contractions, may be developed in the frame of deformation theory for algebraic structures.

\begin{definition}
Let $\varphi_0$ be a nilpotent Jordan algebra. A deformation of $\varphi$ is an uniparametric formal series
\begin{equation*}
\varphi_t=\varphi_0+\sum_{i=1}^\infty t^i \varphi_i
\end{equation*} where $\varphi_i: V \times V \longrightarrow V$ are symmetric bilinear mappings verifying the equations of $\mathcal{J}^n,$ that is, Jordan identity and nilpotency conditions.
\end{definition}

Given two deformations $\varphi_t^{1}$ and $\varphi_t^2$ of the Jordan algebra $\varphi_0,$ an equivalence relation between $\varphi_t^1$ and $\varphi_t^2$ may be defined by the existence of a linear isomorphism
\begin{equation*}
F_t=Id+\sum_{i=1}^\infty t^i g_i, \quad g_i \in GL(n,\mathbb{C})
\end{equation*} such that $\varphi_t^2(x,y)=F_t^{-1}(\varphi_t^1(F_t(x),F_t(y)))$ for all $x,y \in V.$

\smallskip

A deformation $\varphi_t$ of $\varphi_0$ is said to be trivial if it is equivalent to $\varphi_0.$ If $\varphi_0$ is deformed non-trivially over $\varphi,$ then
\begin{equation} \label{ineq2}
s(\varphi) \geq s(\varphi_0), \quad \dim \mathcal{O}(\varphi) > \dim \mathcal{O}(\varphi_0), \quad \dim Z(\varphi) \leq \dim Z(\varphi_0).
\end{equation} It follows that $\varphi_0$ is rigid if and only if any deformation $\varphi_t$ is trivial \cite{Ge}.

\section{The variety $\mathcal{J}^3$}

In dimension two, the only non-Abelian nilpotent Jordan algebra is given by the product $\varphi(e_1,e_1)=e_2.$ Thus, the variety $\mathcal{J}^2$ is irreducible \cite{AnRu}. In this paragraph we determine the isomorphism classes of $\mathcal{J}^3$, describe the irreducible components and extend contractions among their isomorphism classes.

\begin{theorem} [Classification of three-dimensional nilpotent Jordan algebras] Let $\varphi$ be a three-dimensional nilpotent complex Jordan
algebra. If $\varphi$ is not Abelian, then $\varphi$ is isomorphic to one of the following pairwise non-isomorphic algebras:

\begin{table}[ht]
\caption{}\label{eqtable}
\renewcommand\arraystretch{1.5}
\noindent\[
\begin{array}{ccccc}
\hline
\text{Isomorphism class} & & s(\varphi) & \dim \mathcal{O}(\varphi) & \dim Z(\varphi) \\
\hline
\varphi_1(e_1,e_1)=e_2 & \varphi_1(e_1,e_2)=e_3 & (3) & 7 & 1 \\
\hline
\varphi_2(e_1,e_1)=e_2 & \varphi_2(e_3,e_3)=e_2 & (2,1) & 6 & 1 \\
\hline
\varphi_3(e_1,e_1)=e_2 & & (2,1) & 4  & 2 \\
\hline
\end{array}
\]
\end{table}
\end{theorem}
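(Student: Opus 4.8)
The plan is to classify all three-dimensional nilpotent Jordan algebras by a systematic case analysis driven by the invariants appearing in the table: the characteristic sequence $s(\varphi)$ and, subordinate to it, the dimension of the center and the structure of the commutator ideal $\mathcal{C}^2(J)$. Since $\varphi$ is nilpotent of dimension $3$, the nilindex is at most $3$, and the possible characteristic sequences are severely constrained: $s(\varphi)$ is a partition of $3$ of the form $(3)$, $(2,1)$, or $(1,1,1)$, the last corresponding to the Abelian algebra which is excluded. So the whole argument splits into the two cases $s(\varphi)=(3)$ and $s(\varphi)=(2,1)$.

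\textbf{Case $s(\varphi)=(3)$.} Here there exists a characteristic vector $x\notin\mathcal{C}^2(J)$ such that $L_x$ is a single nilpotent Jordan block of size $3$. Setting $e_1=x$, $e_2=\varphi(e_1,e_1)$, and $e_3=\varphi(e_1,e_2)$ gives a basis in which $\varphi(e_1,e_1)=e_2$ and $\varphi(e_1,e_2)=e_3$; nilpotency forces $\varphi(e_1,e_3)=0$. The remaining products $\varphi(e_2,e_2)$, $\varphi(e_2,e_3)$, $\varphi(e_3,e_3)$ must land in $\mathcal{C}^2(J)=\langle e_2,e_3\rangle$, and I would show that the Jordan identity (1.1), applied to well-chosen arguments, forces all of these to vanish (they contribute terms of nilpotency degree too high to be compensated). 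This yields exactly $\varphi_1$.

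\textbf{Case $s(\varphi)=(2,1)$.} Now the maximal Jordan block of any $L_x$ has size $2$, so $\varphi$ has nilindex $2$, i.e.\ $\varphi(\mathcal{C}^2(J),J)=0$ and the image $\varphi(V,V)$ is central. Picking a characteristic vector $e_1$ with $\varphi(e_1,e_1)=e_2\neq 0$ central, the classification reduces to understanding the symmetric bilinear form $(x,y)\mapsto\varphi(x,y)$ taking values in the one- or two-dimensional central ideal. When $\dim Z(\varphi)=1$ the image is necessarily one-dimensional and the quadratic form has rank $2$, yielding a second independent generator $e_3$ with $\varphi(e_3,e_3)=e_2$ after normalization: this is $\varphi_2$. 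When $\dim Z(\varphi)=2$ the form has rank $1$, giving $\varphi_3$. The bookkeeping here is a rank-and-normal-form computation for a symmetric bilinear map into the center, together with checking the Jordan identity is automatically satisfied in nilindex $2$.

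\textbf{Pairwise non-isomorphism and the invariant values.} Once the three laws are isolated, I would verify they are pairwise non-isomorphic by reading off the triple $(s(\varphi),\dim\mathcal{O}(\varphi),\dim Z(\varphi))$ from the table, since these are isomorphism invariants: $\varphi_1$ is distinguished by $s=(3)$, while $\varphi_2$ and $\varphi_3$ share $s=(2,1)$ but are separated by $\dim Z(\varphi)=1$ versus $2$ (equivalently $\dim\mathcal{O}=6$ versus $4$, via the Proposition relating orbit dimension to derivations). The orbit dimensions are computed exactly as in the worked Example: write a generic $f\in GL(3,\mathbb{C})$, form the coboundaries $\delta_\varphi f$, and count independent parameters. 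The main obstacle I anticipate is not any single step but the completeness of the case analysis in the nilindex-$2$ case: one must be careful that every symmetric bilinear form into the center has been brought to one of the two listed normal forms and that no further isomorphism identifies $\varphi_2$ with $\varphi_3$—this is where invoking $\dim Z$ as a separating invariant is essential.
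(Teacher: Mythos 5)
Your proposal is correct and arrives at the same three normal forms, but in the main case $s(\varphi)=(2,1)$ it takes a genuinely different route from the paper. The paper proves by contradiction that some characteristic vector has non-zero square: assuming $\varphi(x,x)=0$ for all characteristic vectors, it reduces the law to the single product $\varphi(e_1,e_2)=e_3$ and exhibits the explicit complex change of basis (\ref{changebasis}) producing a characteristic vector with non-zero square; it then pins down the remaining constants ($\varphi(e_2,e_2)=0$ from the Jordan identity, $\varphi(e_2,e_3)=0$ and $\varphi(e_3,e_3)=ae_2$ from nilpotency) and separates $\varphi_2$ from $\varphi_3$ by the nullity of $a$, i.e.\ by $\dim Z(\varphi)$. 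You instead first argue that $\mathcal{C}^2(J)$ is central, so that $\varphi$ becomes a symmetric bilinear map into the center with the Jordan identity automatically satisfied, and you classify that map by rank. Where the paper's argument is ad hoc, yours is structural: once the product is center-valued, a vector with $\varphi(x,x)\neq 0$ exists by mere polarization of a non-zero quadratic map (in characteristic $\neq 2$), so the paper's contradiction step is subsumed by the normal-form theory of symmetric forms, and your dichotomy (rank $2$, $\dim Z=1$, giving $\varphi_2$; rank $1$, $\dim Z=2$, giving $\varphi_3$) is exactly the paper's dichotomy on $a$. The case $s(\varphi)=(3)$ and the separation of classes by $(s(\varphi),\dim Z(\varphi))$ are essentially as in the paper; note only that in the $(3)$ case the Jordan identity alone does not kill terms such as $\varphi(e_3,e_3)=\delta e_3$ or $\varphi(e_2,e_3)=\beta e_3$ --- you need nilpotency as well, or the paper's quicker observation that the non-trivial center forces $L_{e_3}=0$.

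The one step you state as immediate in fact carries the weight of your approach and needs an argument: ``the maximal Jordan block of any $L_x$ has size $2$, so $\varphi$ has nilindex $2$.'' The characteristic sequence only controls $L_x$ for $x\notin\mathcal{C}^2(J)$, and even $L_x^2=0$ for all $x$ does not by itself give $\varphi(\mathcal{C}^2(J),J)=0$, since $\mathcal{C}^3(J)$ is built from compositions $L_cL_b$ with \emph{distinct} arguments. The gap is fixable in three lines, but the lines must be written: $x\mapsto L_x^2$ is a polynomial map vanishing on the complement of the proper subspace $\mathcal{C}^2(J)$, hence identically; linearizing yields $L_aL_b+L_bL_a=0$ for all $a,b$; then $T(a,b,c)=\varphi(\varphi(a,b),c)=L_cL_b(a)$ is symmetric in its first two slots and antisymmetric in its last two, so $T\equiv 0$, i.e.\ $\mathcal{C}^3(J)=0$ and $\mathcal{C}^2(J)\subseteq Z(\varphi)$. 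Be aware also of a terminology clash: with the paper's definition (least $k$ with $\mathcal{C}^k(J)=0$) these algebras have nilindex $3$, not $2$. Finally, since your polarization argument for an anisotropic vector works over any field of characteristic $\neq 2$, it is stronger than the paper's complex-specific change of basis (\ref{changebasis}); this is worth keeping in mind when confronting the paper's subsequent remark on the real case.
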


\begin{proof}
Let $J=(\mathbb{C}^3, \varphi)$ be a nilpotent Jordan algebra.
Since $\varphi$ is non-Abelian, the possible values of the characteristic sequence are $(3)$ and $(2,1).$ If $s(\varphi)=(3),$ there exists a characteristic vector $e_1$ and a basis $\{e_1,e_2,e_3\}$ for $\mathbb{C}^3$ such that
\begin{equation*}
\varphi(e_1,e_1)=e_2, \quad \varphi(e_1,e_2)=e_3.
\end{equation*} When applied to $x=y=e_1,$ Jordan
identity shows that $\varphi(e_2,e_2)=0$ and, being $Z(\varphi)$ non-trivial, the operator $L_{e_3}$ must be identically null. Thus, there is only one nilpotent Jordan algebra defined by
\begin{equation*}
\varphi_1(e_1,e_1)=e_2, \quad \varphi_1(e_1,e_2)=e_3.
\end{equation*}

If $s(\varphi)=(2,1),$ there exists a characteristic vector $e_1$
such that $\varphi(e_1,e_1)$ is non-zero. By contradiction, assume
that $\varphi(x, x)=0$ holds for every characteristic vector.
Then, we can find a basis $\{e_1,e_2,e_3\}$ such that the only
non-zero product is  $\varphi(e_1,e_2)=e_3.$ However, if we
consider
\begin{equation} \label{changebasis}
x_1=\frac{1}{\sqrt{2}}(e_1+e_2), \quad x_2=e_3, \quad x_3=-\frac{1}{\sqrt{2}}i(e_1-e_2),
\end{equation} then $s_\varphi(x_1)=s(\varphi)$ and $\varphi(x_1,x_1)=x_2.$ Thus, there exists a basis $\{e_1,e_2,e_3\}$ for $\mathbb{C}^3$ such that $\varphi(e_1,e_1)=e_2.$ Jordan identity evaluated in $x=y=e_1$ shows that $\varphi(e_2,e_2)=0$ and nilpotency implies $\varphi(e_2,e_3)=0$ and $\varphi(e_3,e_3)=ae_2$ for some $a \in \mathbb{C}.$ We claim that the nullity or not of this parameter defines the isomorphism class, since the dimension of the center depends on it. If $a \neq 0,$ the change of basis $x_1=e_1,$ $x_2=e_2$ and $x_3=\frac{1}{\sqrt{a}}e_3$ yields to
\begin{equation*}
\varphi_2(e_1,e_1)=e_2, \quad \varphi_2(e_3,e_3)=e_2.
\end{equation*} Otherwise, $\varphi$ is isomorphic to the Jordan algebra defined by
\begin{equation*}
\varphi_3(e_1,e_1)=e_2. \qedhere
\end{equation*} \end{proof}

The classification given above may be easily adapted to the variety of three-dimensional nilpotent Jordan algebras over the field of real numbers. We note that the condition $\varphi(x,x)=0$ for every characteristic vector holds for $\varphi_4(e_1,e_2)=e_3,$ since the change of basis (\ref{changebasis}) is not possible anymore. Indeed, when the parameter $a \in \mathbb{R}$ of the proof is negative, another isomorphism class is obtained:
\begin{align*}
\varphi_5(e_1,e_1)&=e_2, \quad \varphi_5(e_3,e_3)=-e_2,
\end{align*} These algebras are non-isomorphic to any of the former ones. In particular, $\varphi_4$ is not associative and $\dim \mathcal{O}(\varphi_4)=5$, $\dim \mathcal{O}(\varphi_5)=6.$

\smallskip

\subsection{Irreducible components}

\begin{theorem}
The variety $\mathcal{J}^3$ is irreducible.
\end{theorem}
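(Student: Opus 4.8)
The plan is to show that the whole variety coincides with the Zariski closure of a single orbit, namely $\mathcal{J}^3=\overline{\mathcal{O}(\varphi_1)}^Z$. By the classification theorem, $\mathcal{J}^3$ is the union of the orbits $\mathcal{O}(\varphi_1)$, $\mathcal{O}(\varphi_2)$, $\mathcal{O}(\varphi_3)$ together with the single point given by the Abelian algebra. Each orbit is the image of the connected, irreducible group $GL(3,\mathbb{C})$ under a morphism, hence irreducible, and so is its Zariski closure; since $\mathcal{J}^3$ is closed, taking closures gives $\mathcal{J}^3=\overline{\mathcal{O}(\varphi_1)}^Z\cup\overline{\mathcal{O}(\varphi_2)}^Z\cup\overline{\mathcal{O}(\varphi_3)}^Z$, so the irreducible components of $\mathcal{J}^3$ are among these three closed sets. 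It therefore suffices to prove that $\varphi_2$, $\varphi_3$ and the Abelian algebra all lie in $\overline{\mathcal{O}(\varphi_1)}^Z$. As recalled above, a contraction of $\varphi_1$ is exactly a closure point of $\mathcal{O}(\varphi_1)$, so this amounts to exhibiting contractions $\varphi_1\longrightarrow\varphi_2$, $\varphi_1\longrightarrow\varphi_3$ and $\varphi_1\longrightarrow(\text{Abelian})$.

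Two of these are already available. Example \ref{ex1} realises $\varphi_1\longrightarrow\varphi_3$ by the diagonal family $f_t(e_1)=te_1$, $f_t(e_2)=t^2e_2$, $f_t(e_3)=e_3$, and the scaling $x_i=te_i$ contracts $\varphi_1$ onto the Abelian algebra. Both are consistent with the inequalities (\ref{ineq}): the characteristic sequence drops from $(3)$ to $(2,1)$ and then to the trivial one, the orbit dimension decreases as $7>4>0$, and the centre grows. The remaining contraction $\varphi_1\longrightarrow\varphi_2$ is the only possible source for $\varphi_2$, since an algebra of orbit dimension $6$ can only be a degeneration of the unique algebra of orbit dimension $7$; thus its existence is forced by the very statement we are proving, and the whole argument hinges on producing it explicitly.

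The main obstacle is precisely this last contraction, because it must lower $\dim\mathcal{C}^2(J)$ from $2$ (for $\varphi_1$) to $1$ (for $\varphi_2$) while manufacturing a \emph{new} relation $\varphi(e_3,e_3)=e_2$ with vanishing cross product $\varphi(e_1,e_3)=0$. A diagonal family cannot do this: it merely rescales the existing products $\varphi_1(e_1,e_1)=e_2$ and $\varphi_1(e_1,e_2)=e_3$, so it can never create the product $\varphi(e_3,e_3)=e_2$, which is zero in $\varphi_1$. One is thus forced to mix $e_3$ into the $e_1$-direction, and a short computation shows that this works only when the leading coefficient is purely imaginary. Concretely, I would take
\[
f_t(e_1)=e_1,\qquad f_t(e_2)=e_2,\qquad f_t(e_3)=i\,e_1+\tfrac{\sqrt{i}}{t}\,e_2+\tfrac{1}{t^2}\,e_3,
\]
and verify that $\lim_{t\to 0}f_t^{-1}(\varphi_1(f_t(x),f_t(y)))$ exists on every basis pair and equals $\varphi_2$. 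The key check is that $\varphi_1(f_t(e_3),f_t(e_3))=-e_2+\tfrac{2i\sqrt{i}}{t}\,e_3$ is carried by $f_t^{-1}$ to $e_2$ in the limit: this happens exactly because the ratio of coefficients has been tuned so that $\beta_3^2/\gamma_3=i$, which is simultaneously what forces $\varphi(e_1,e_3)\to 0$, thereby dissolving the conflict that defeats every real or diagonal choice. Once the three contractions are in place we conclude $\mathcal{J}^3=\overline{\mathcal{O}(\varphi_1)}^Z$, an irreducible variety of dimension $7$, which completes the proof.
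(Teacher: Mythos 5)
Your proof is correct and follows the paper's overall strategy: reduce irreducibility to the single equality $\mathcal{J}^3=\overline{\mathcal{O}(\varphi_1)}^Z$ and establish it by explicit contractions (I verified your family: with $f_t^{-1}(e_3)=-it^2e_1-\sqrt{i}\,te_2+t^2e_3$ all six basis products converge, $\varphi_t(e_3,e_3)\to e_2$ while $\varphi_t(e_1,e_2),\varphi_t(e_1,e_3),\varphi_t(e_2,e_3)\to 0$, so the limit is exactly $\varphi_2$). Where you diverge from the paper is the step $\varphi_1\longrightarrow\varphi_2$, and there your claim of necessity is wrong even though your construction succeeds: a contraction need only reach \emph{some} point of $\mathcal{O}(\varphi_2)$, not the chosen normal form, because $\overline{\mathcal{O}(\varphi_1)}^Z$ is stable under the $GL(3,\mathbb{C})$-action, so containing one point of an orbit it contains the whole orbit. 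The paper exploits exactly this: the diagonal family $f_t(e_1)=te_1$, $f_t(e_2)=e_2$, $f_t(e_3)=te_3$ contracts $\varphi_1$ onto the algebra with single product $\varphi(e_1,e_2)=e_3$, which is isomorphic to $\varphi_2$ via the change of basis (\ref{changebasis}) already used in the classification theorem. Thus your assertions that ``a diagonal family cannot do this'' and that one is ``forced to mix $e_3$ into the $e_1$-direction'' are false as statements about the contraction $\varphi_1\longrightarrow\varphi_2$; they are true only if one insists on converging to the representative $\varphi_2(e_1,e_1)=e_2$, $\varphi_2(e_3,e_3)=e_2$ on the nose, which is an unnecessary constraint. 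Your non-diagonal family with the tuned imaginary coefficient buys nothing beyond the paper's two-line diagonal argument, at the cost of a harder computation; and the aside that the existence of the contraction ``is forced by the very statement we are proving'' is, as you yourself flag, circular and carries no logical weight --- the proof stands only because the explicit family does.
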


\begin{proof}
Since $\varphi_1$ is the only Jordan algebra with maximal characteristic sequence, $\varphi_1$ is rigid and the Zariski closure of its orbit forms an irreducible component of the variety. It suffices to prove that $\varphi_2$ and $\varphi_3$ belong to $\overline{\mathcal{O}(\varphi_1)}^Z.$ It has already been shown (example \ref{ex1}) that $\varphi_3$ is a contraction of $\varphi_1.$ Moreover, if we consider the linear transformations
\begin{equation*}
f_t(e_1)=te_1, \quad f_t(e_2)=e_2, \quad f_t(e_3)=te_3.
\end{equation*} the algebra $\varphi(x_1,x_2)=x_3$ is obtained in the limit. Since $\varphi$ and $\varphi_2$ are isomorphic, $\varphi_2$ is a contraction of $\varphi_1.$
\end{proof}

\begin{remark}
The variety $\mathcal{J}^3$ is still irreducible when considered over the reals, because the families of automorphisms
\begin{align*}
f_t(e_1)&=te_1 \quad f_t(e_2)=te_2 \ \ \ \quad f_t(e_3)=t^2e_3 \\
g_t(e_1)&=te_3 \quad g_t(e_2)=-t^2e_2 \quad g_t(e_3)=te_1
\end{align*} induce contractions of $\varphi_1$ over the real Jordan algebras $\varphi_4$ and $\varphi_5$ respectively.
\end{remark}

In order to study the relation between $\mathcal{J}^3$ and $\mathcal{A}^3,$ the variety of three-dimensional nilpotent associative algebras, we state the structure theorem \cite{Ma} choosing convenient representatives of each isomorphism class:

\begin{theorem} Let $(\mathbb{C}^3, \beta)$ be a nilpotent associative algebra. If $\beta$ is non-Abelian, then $\beta$ is isomorphic to one of the following associative algebras.
\begin{enumerate}
\item $\ \beta_1(e_1,e_1)=e_2 \ \ \quad \beta_1(e_1,e_2)=e_3 \quad \ \beta_1(e_2,e_1)=e_3$
\item $\beta_2^{\mu}(e_1,e_1)=e_2 \ \ \quad \beta_2^{\mu}(e_1,e_3)=e_2 \quad \ \beta_2^{\mu}(e_3,e_3)=\mu e_2 \ \ \quad \mu \in \mathbb{C}$
\item $\ \beta_3(e_1,e_1)=e_2 \ \ \quad \beta_3(e_3,e_3)=e_2$
\item $\ \beta_4(e_1,e_2)=-e_3 \quad \beta_4(e_2,e_1)=e_3$
\item $\ \beta_5(e_1,e_1)=e_2$
\end{enumerate} Indeed, the algebra $\beta_1$ and the family $\{\beta_2^{\mu}\}_{\mu\neq 0}$ are rigid.
\end{theorem}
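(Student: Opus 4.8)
The plan is to stratify the classification by the dimension of the derived subalgebra $A^2=\beta(A,A)$, mirroring the way the Jordan case was organized by the characteristic sequence. Writing $A=(\mathbb{C}^3,\beta)$, non-Abelian nilpotency forces $0\neq A^2\subsetneq A$ (if $A^2=A$ the chain never reaches $0$), so $\dim A^2\in\{1,2\}$, and the descending chain $A\supsetneq A^2\supsetneq A^3\supsetneq\cdots$ drops strictly until it hits $0$. When $\dim A^2=2$ the flag has dimensions $3>2>1>0$, so $A^4=0$ and $A/A^2$ is one-dimensional. Picking $e_1\notin A^2$, one checks that $e_1$ generates $A$ as an algebra, whence $A$ is a nilpotent quotient of the free associative algebra on one generator; in particular it is commutative and isomorphic to $x\,\mathbb{C}[x]/(x^4)$. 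Setting $e_2=e_1^{2}$, $e_3=e_1^{3}$ recovers $\beta_1$, so this case contributes exactly one algebra.

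When $\dim A^2=1$ the subspace $A^3\subseteq A^2$ is proper, hence $A^3=0$: the algebra is $2$-step nilpotent and $A^2\cdot A=A\cdot A^2=0$. Choosing $e_2$ to span $A^2$ and a complement $W=\langle e_1,e_3\rangle$, the product is encoded by a nonzero bilinear form $b\colon W\times W\to\mathbb{C}$ via $\beta(w,w')=b(w,w')e_2$, and since both sides of associativity land in $A^3=0$, \emph{every} nonzero $b$ is admissible. Two such algebras are isomorphic exactly when the forms agree up to congruence and rescaling, $b'\sim\lambda\,b(g\cdot,g\cdot)$ with $g\in GL(W)$ and $\lambda\in\mathbb{C}^{*}$, so the whole case reduces to classifying nonzero bilinear forms on $\mathbb{C}^{2}$ under this action.

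To produce normal forms I would split $b$ into its symmetric and alternating parts. The alternating part lives in a one-dimensional space, so it is either $0$ or nondegenerate, and this dichotomy is invariant. If it vanishes, $b$ is symmetric and its rank ($1$ or $2$) yields $\beta_5$ and $\beta_3$. If it is nonzero I normalize it to the standard symplectic form; the residual symmetry is the symplectic group up to scalars, and the complete invariant becomes the conjugacy class of the cosquare $B^{-1}B^{\mathsf T}$ (for invertible $B$), whose eigenvalues occur in reciprocal pairs $\{\lambda,\lambda^{-1}\}$. The diagonalizable classes with $\lambda\neq\pm1$, the non-diagonalizable class at $\lambda=-1$, and the rank-one degeneration assemble into the pencil $\beta_2^{\mu}$, while the scalar class $B^{\mathsf T}=-B$ is the purely alternating $\beta_4$; tracking the eigenvalue (equivalently $\mathrm{tr}\,B^{-1}B^{\mathsf T}=2-\mu^{-1}$) shows the $\beta_2^{\mu}$ are pairwise non-isomorphic and distinct from $\beta_3,\beta_4$.

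For the rigidity claims, $\beta_1$ is the unique algebra of maximal characteristic sequence $(3)$ (its left multiplication $L_{e_1}$ is a single Jordan block), so the argument that made $\varphi_1$ rigid applies verbatim: by \eqref{ineq2} no nontrivial deformation exists, hence $\overline{\mathcal{O}(\beta_1)}^{Z}$ is an irreducible component. For the family I would observe that $\beta_1,\beta_3,\beta_5$ are commutative whereas $\beta_2^{\mu}$ and $\beta_4$ are not, and since commutativity is preserved under contraction the non-commutative locus is disjoint from $\overline{\mathcal{O}(\beta_1)}^{Z}$. The image of $(\mu,g)\mapsto g\cdot\beta_2^{\mu}$ over $\mu\neq0$ is irreducible of dimension $\dim\mathcal{O}(\beta_2^{\mu})+1$, and a coboundary count as in the Example shows that $\beta_4$ and the rank-one degeneration $\beta_2^{0}$ have strictly smaller orbit dimension; thus the union of orbits of the family is open in its closure, and being disjoint from the other component it is open in $\mathcal{A}^{3}$, so the family is rigid. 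The main obstacle is the normal-form step: producing the continuous invariant that separates the members of $\{\beta_2^{\mu}\}$ and verifying that the pencil together with $\beta_4$ and the degenerate form exhausts the case of non-symmetric $b$.
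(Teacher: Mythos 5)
You should know at the outset that the paper contains no proof of this theorem to compare against: it is quoted verbatim from Mazzola \cite{Ma} (``we state the structure theorem''), so your argument stands or falls on its own. The classification half of your proposal is essentially sound. The stratification by $\dim A^2$ is correct: strictness of the descending chain for nilpotent associative algebras forces $A^4=0$, $\dim A^3=1$ when $\dim A^2=2$, and your Nakayama-style observation that $A$ is then generated by any $e_1\notin A^2$ does yield $A\cong x\mathbb{C}[x]/(x^4)$, i.e.\ $\beta_1$. When $\dim A^2=1$ the reduction to a nonzero bilinear form $b$ on $W\cong\mathbb{C}^2$ modulo congruence and scaling is exactly right (the translation part $\chi$ in $f(w)=g(w)+\chi(w)e_2$ drops out of the isomorphism condition), and your invariant $\mathrm{tr}\,(B^{-1}B^{\mathsf T})=2-\mu^{-1}$ for $\beta_2^\mu$ is correct. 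The normal-form step you flag as the ``main obstacle'' is genuine but fillable; be aware it needs two facts you only invoke implicitly: that nonsingular complex matrices are congruent if and only if their cosquares are similar, and that the unipotent non-diagonalizable cosquare $J_2(1)$ is not realized by any $2\times 2$ form (otherwise a class at trace $2$ would be missing from the list); also, at the trace $-2$ point of the pencil ($\mu=\tfrac14$) one must check the cosquare is $J_2(-1)$ rather than $-I$, which holds because $\beta_2^{1/4}$ is not anticommutative.

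The genuine error is in the rigidity argument for the family: your claim that the degeneration $\beta_2^0$ has strictly smaller orbit dimension than the $\beta_2^\mu$ is false. An automorphism must satisfy $f(e_2)=\lambda e_2$ and $g^{\mathsf T}Bg=\lambda B$; splitting $B$ into symmetric part $S$ and alternating part $K$ and using $g^{\mathsf T}Kg=(\det g)K$ gives $\lambda=\det g$ and $g^{\mathsf T}Sg=(\det g)S$, and for every $\mu$ \emph{including} $\mu=0$ (where $S=\bigl(\begin{smallmatrix}1&1/2\\1/2&0\end{smallmatrix}\bigr)$ is still nondegenerate) the solution set $\mathbb{C}^{*}\!\cdot SO(S)$ is two-dimensional; with the two parameters of $\chi$ this gives $\dim\mathrm{Aut}=4$ and orbit dimension $9-4=5$ in all cases (for $\mu=0$, explicitly $q=0$, $s=p+r$). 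This is no accident: $\beta_2^0$ lies in the closure of $U=\bigcup_{\mu\neq0}\mathcal{O}(\beta_2^{\mu})$, which has dimension $6$, without lying in any single orbit closure, so its orbit dimension need not drop below $5$. The conclusion survives, but for a different reason, and you should replace the dimension comparison by the standard fact that the boundary of an orbit of an algebraic group consists of orbits of strictly smaller dimension: hence $\overline{\mathcal{O}(\beta_2^0)}$ contains no $5$-dimensional orbit other than $\mathcal{O}(\beta_2^0)$ itself, $\overline{\mathcal{O}(\beta_4)}$ has dimension $3$, and the closures of the orbits of the commutative laws $\beta_1,\beta_3,\beta_5$ and the Abelian algebra contain only commutative laws; therefore the complement of $U$ is closed, $U$ is open, and the family is rigid. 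Your argument for $\beta_1$ is fine, provided you note that the inequalities (\ref{ineq2}), stated in the paper for $\mathcal{J}^n$, hold verbatim in the associative variety, since they rest only on semicontinuity of the Jordan type of $L_x$ and of the orbit dimension.
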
 It follows from the theorem that $\mathcal{J}^3 \subset \mathcal{A}^3.$ The converse is false, but if we consider the classical correspondence between associative and Jordan algebras
\begin{equation} \label{corr}
\varphi(x,y)=\frac{1}{2}[\beta(x,y)+\beta(y,x)]
\end{equation} the image of $\mathcal{A}^3$ coincides with $\mathcal{J}^3.$ In effect, $\beta_4$ is a Lie algebra, so it yields to $\varphi \equiv 0$ and if $\beta$ belongs to $\{\beta_2^{\mu}\},$ then
\begin{equation*}
\varphi(e_1,e_1)=e_2 \quad \varphi(e_1,e_3)=\frac{1}{2}e_2 \quad \varphi(e_3,e_3)=\mu e_2,
\end{equation*} which is isomorphic to $\varphi_2.$ Here we have an example of a rigid family of associative algebras whose Jordan product is not rigid, so (\ref{corr}) does not preserve rigidity.

\section{The variety $\mathcal{J}^4$}

In this paragraph, we determine the structure of four-dimensional nilpotent Jordan algebras in function of the possible values of the characteristic sequence.

\subsection{Characteristic sequence $s(\varphi)=(4)$}

If $s(\varphi)=(4),$ there exists a basis $\{e_1,e_2,e_3,e_4\}$ for $\mathbb{C}^4$ such that the center of the algebra is spanned by $\{e_4\}$ and the following relations hold:
\begin{equation*}
\varphi(e_1,e_1)=e_2, \quad \varphi(e_1,e_2)=e_3, \quad \varphi(e_1,e_3)=e_4.
\end{equation*} It follows from Jordan identity that $\varphi(e_2,e_2)=e_4$ and $\varphi(e_2,e_3)=0.$ Assume $\varphi(e_3,e_3)=ae_2+be_3+ce_4.$ Nilpotency implies the nullity of $b$ and Jordan identity applied to $x=e_3, y=e_1$ yields to $a=0.$ In fact, $c$ must be zero because, otherwise, if we set $x=e_1+e_3,$ the constraints for the structure constants of $\varphi(x,x)$ would not be satisfied. Thus, $\varphi$ is isomorphic to the Jordan algebra defined by the law
\begin{equation}
\varphi_1(e_i,e_j)=e_{i+j} \quad 2 \leq i+j \leq 4.
\end{equation}
This way, there is only an isomoprhism class when the characteristic sequence is maximal. In arbitrary dimension, this algebra comes to be a rigid intersection point of the varieties of associative and Jordan algebras.

\subsection{Characteristic sequence $s(\varphi)=(3,1)$}

\begin{lemma} \label{char.seq.3.1}
Let $(\mathbb{C}^4, \varphi)$ be a four-dimensional complex nilpotent Jordan algebra. If $s(\varphi)=(3,1),$ there exists a basis $\{e_1,e_2,e_3,e_4\}$ such that
\begin{align*}
\varphi(e_1,e_1)=e_2, \quad \varphi(e_1,e_2)=e_3, \quad \varphi(e_1,e_3)=\varphi(e_1,e_4)=0, \\
\varphi(e_2,e_2)=\varphi(e_2,e_3)=0, \quad \varphi(e_2,e_4)=ae_3, \quad \varphi(e_3,e_3)=be_3+ce_4, \\ \varphi(e_3,e_4)=de_2+fe_3+ge_4, \quad \quad
\varphi(e_4,e_4)=he_2+ie_3+je_4,
\end{align*} for some complex parameters $a,b,c,d,e,f,g,h,i,j.$
\end{lemma}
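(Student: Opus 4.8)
The plan is to read the whole multiplication table off a single characteristic vector, using power-associativity, the nilpotency of the operators $L_{e_i}$ and the (linearized) Jordan identity, exactly in the spirit of the cases $s(\varphi)=(4)$ and $s(\varphi)=(3)$ already treated. First I would fix a characteristic vector $e_1$, so that $L_{e_1}$ has Jordan type $(3,1)$; in particular $L_{e_1}^{3}=0$ while $L_{e_1}^{2}\neq 0$. I want $e_1$ to generate the block of size three through its own powers, so I set $e_2=\varphi(e_1,e_1)$ and $e_3=\varphi(e_1,e_2)$. Power-associativity identifies these with $e_1^{2}$ and $e_1^{3}$, whence $\varphi(e_1,e_3)=e_1^{4}=L_{e_1}^{3}e_1=0$ and $e_1,e_2,e_3$ are independent. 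Completing to a basis by a vector $e_4\in\ker L_{e_1}$ independent of $e_3$ (a generator of the one-dimensional block) gives $\varphi(e_1,e_4)=0$ and a basis of $\mathbb{C}^4$.

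The delicate point, which I expect to be the main obstacle, is the legitimacy of assuming $e_1^{3}\neq 0$: a characteristic vector need not a priori generate the longest chain through its powers. I would settle this by showing that the cubic map $x\mapsto x^{3}$ cannot vanish on the whole (Zariski dense) set of characteristic vectors, for otherwise it vanishes identically. Assuming $x^{3}\equiv 0$, the polarized Jordan identity yields $\varphi(x^{2},y)=-2\,\varphi(x,\varphi(x,y))$ for all $x,y$; evaluating this on a generator of the size-three block of $L_{x}$ forces a multiplication operator $L_{e_i}$ to acquire a nonzero proper value, contradicting the nilpotency of the algebra. Hence some characteristic vector has nonzero cube and may be taken as $e_1$.

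Next I would compute the products not involving $e_1$. The Jordan identity evaluated at $x=e_1$ reads $[L_{e_2},L_{e_1}]=0$, since $e_2=e_1^{2}$. Applying this commutator to $e_1$ and to $e_2$ forces $\varphi(e_2,e_2)=0$ and $\varphi(e_2,e_3)=0$, while applying it to $e_4$ gives $L_{e_1}\varphi(e_2,e_4)=0$, that is $\varphi(e_2,e_4)\in\ker L_{e_1}=\mathrm{span}\{e_3,e_4\}$. The nilpotency of $L_{e_2}$ then excludes the $e_4$-component, since such a component would appear as a nonzero eigenvalue of $L_{e_2}$; this leaves $\varphi(e_2,e_4)=a\,e_3$.

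Finally I would pin down $\varphi(e_3,e_3)$, $\varphi(e_3,e_4)$ and $\varphi(e_4,e_4)$. Each lies in $\mathcal{C}^{2}(J)$, and by choosing $e_4$ inside $\ker L_{e_1}$ adapted to the lower central series one arranges $\mathcal{C}^{2}(J)\subseteq\mathrm{span}\{e_2,e_3,e_4\}$, so that no product carries an $e_1$-component (an $e_1$-term in, say, $\varphi(e_4,e_4)$ is incompatible with the Jordan identity at $e_4$, which would revive $\varphi(e_1,e_1)=e_2$). The same identity forces $L_{e_1}\varphi(e_3,e_3)=0$, placing $\varphi(e_3,e_3)$ in $\ker L_{e_1}=\mathrm{span}\{e_3,e_4\}$, while $\varphi(e_3,e_4)$ and $\varphi(e_4,e_4)$ are only constrained to lie in $\mathrm{span}\{e_2,e_3,e_4\}$. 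Collecting these facts produces the displayed table with free structure constants; the further relations among them are then the business of the case analysis that follows the lemma.
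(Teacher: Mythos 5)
Your argument is correct, but it reaches the one genuinely delicate point --- the existence of a characteristic vector $e_1$ with $e_1^3\neq 0$ --- by a different route than the paper. The paper argues constructively, splitting into cases along the lower central series: if $\varphi(x_1,x_1)\in\mathcal{C}^2(J)-\mathcal{C}^3(J)$ the powers of $x_1$ already furnish the basis, and if $\varphi(x_1,x_1)\in\mathcal{C}^3(J)$ it replaces $x_1$ by $e_1=x_1+\alpha x_2$, where $x_2$ generates the size-three chain of $L_{x_1}$ and $\alpha$ is chosen so that $e_1,e_1^2,e_1^3$ are independent. You instead use that the characteristic vectors contain a Zariski-dense open set, so the cubic map would vanish identically if every characteristic vector had zero cube, and then derive a contradiction from the linearized relation $L_{x^2}=-2L_x^2$; your sketch of that contradiction does complete: writing $x$ in a Jordan basis of $L_x$, the hypothesis $x^3=0$ forces $x^2=\beta\,L_x^2y$ with $\beta\neq0$ ($y$ a generator of the size-three block), whence $\varphi(y,L_x^2y)=-(2/\beta)\,L_x^2y$ exhibits a nonzero eigenvalue of the nilpotent operator $L_y$. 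Your global argument is more conceptual and works verbatim in any dimension; the paper's perturbation stays elementary and needs no density input (which the paper never proves, though it is standard). On the multiplication table you are more explicit than the paper's one-line ``the rest follows from nilpotency and Jordan identity,'' but two attributions need repair: $L_{e_1}\varphi(e_3,e_3)=0$ does not follow from $[L_{e_2},L_{e_1}]=0$ itself but from its linearization $[L_{e_3},L_{e_1}]=0$, obtained from the Jordan identity at $e_1+te_2$; and the absence of $e_1$-components in $\varphi(e_3,e_4)$ and $\varphi(e_4,e_4)$ cannot be arranged by choosing $e_4$ inside $\ker L_{e_1}$ --- if some product had an $e_1$-component, $\mathcal{C}^2(J)\cap\ker L_{e_1}$ could reduce to $\mathbb{C}e_3$ and no admissible $e_4$ would absorb it --- rather it is forced, as your parenthetical correctly guesses, by the Jordan identity: $[L_{e_3},L_{e_1}]=0$ applied to $e_4$ kills the $e_1$- and $e_2$-components of $\varphi(e_3,e_4)$, and the identities $[L_{e_2},L_{e_4}]=0$ and $[L_{e_4^2},L_{e_1}]=0$, coming from the Jordan identity at $e_1+te_4$, kill the $e_1$-component of $\varphi(e_4,e_4)$. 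With those two local repairs your proof is complete, and in fact it establishes slightly more than the lemma asserts (namely $d=f=g=0$), which is consistent with the case analysis that follows in the paper.
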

\begin{proof}
We first prove that when the characteristic sequence takes the value $(3,1),$ there exist a basis $\{e_1,e_2,e_3,e_4\}$ for $\mathbb{C}^4$ such that
\begin{equation} \label{basis}
\varphi(e_1,e_1)=e_2, \quad \varphi(e_1,e_2)=e_3, \quad \varphi(e_1,e_3)=\varphi(e_1,e_4)=0.
\end{equation}
In effect, in this case there is a characteristic vector $x_1$ such that $\dim Im R_{x_1}=2$ and $\dim (Im R_{x_1})^2=1.$ If $\varphi(x_1,x_1) \in \mathcal{C}^2(J) - \mathcal{C}^3(J),$ then we take
\begin{equation*}
e_1=x_1, \quad e_2=\varphi(e_1,e_1), \quad e_3=\varphi(e_1,e_2),
\end{equation*}
and complete the basis with some $e_4 \in \ker (Im R_{x_1})$ linearly independent with $e_3.$ Otherwise, $\varphi(x_1,x_1) \in \mathcal{C}^3(J)$ and there exists $x_2 \notin \mathcal{C}^2(J)$ such that $\varphi(x_1,x_2)=x_3$ and $\varphi(x_1,x_3)=x_4.$ Thus $\mathcal{C}^2(J)$ is spanned by $\{x_3,x_4\}$ and $\mathcal{C}^3(J)$ by $\{x_4\}.$ Then $\varphi(x_1,x_1)=ax_4$ for some $a \in \mathbb{C}$ and we may choose $\alpha \in \mathbb{C}$ such that the vectors
\begin{equation*}
e_1=x_1+\alpha x_2 \quad e_2=\varphi(e_1,e_1) \quad e_3=\varphi(e_1,e_2)
\end{equation*} are linearly independent. To complete the basis, just take $e_4 \in \ker (Im R_{x_1})$ linearly independent with $e_3.$ Proven the existence of such a basis, the rest of the lemma follows from nilpotency and Jordan identity.
\end{proof}

Since $e_1$ and $e_2$ are not in the center of the algebra, $Z(\varphi)$ is one or two-dimensional. It is easy to check that if $\dim Z(\varphi)=1$ then the center is spanned by $\{e_3\}.$ By contradiction, assume $Z(\varphi)$ spanned by $\{e_4\}.$ It follows that all the parameters of the lemma are zero, except for $c.$ But then, considering the basis $\{x_1=e_1+e_2, x_2=e_2+ce_4, x_3=e_3, x_4=ce_4\}$ we have
\begin{equation*}
\varphi(x_1,x_i)=x_{i+1} \quad i=1,2,3
\end{equation*} so $s_{\varphi}(x_1)=(4)$ and $s(\varphi)$ would be maximal. Thus, $Z(\varphi)$ is spanned by $\{e_3\}$ or by $\{e_3,e_4\}.$ In the first case, the only a priori non-zero products are
\begin{align*}
\varphi(e_1,e_1)=e_2 \quad \quad \varphi(e_1,e_2)=e_3 \quad \varphi(e_2,e_4)=\gamma e_3 \quad \varphi(e_4,e_4)=\alpha e_2+\beta e_3
\end{align*} where the parameter $j \in \mathbb{C}$ has been eliminated because of nilpotency. Considering the quotient $\varphi / Z(\varphi)$ it is straightforward to prove that the nullity of $\alpha$ is invariant under isomorphism. First assume $\alpha \neq 0.$
\smallskip
\begin{itemize}
\item If $\alpha+\gamma^2 \neq 0,$ then two changes of basis yield to
\begin{equation*}
\varphi_2(e_1,e_1)=e_2, \quad \varphi_2(e_1,e_2)=e_3, \quad \varphi_2(e_4,e_4)=e_2.
\end{equation*}
\item Otherwise, we can reduce the Jordan algebra to the form
\begin{align*}
\varphi(x_1,&x_1)=x_2, \quad \varphi(x_1,x_2)=x_3, \quad \varphi(x_2,x_4)=x_3, \\ &\varphi(x_4,x_4)=-x_2+\beta\frac{1-c}{\gamma^3 (1+c)^2}x_3,
\end{align*} for some $c \in \mathbb{C}.$ If $\beta \neq 0,$ with a convenient choice of the parameter
\begin{align*}
\varphi_3(e_1,e_1)=e_2, \quad \varphi_3(e_1,e_2)=e_3, \quad \varphi_3(&e_2,e_4)=e_3, \quad \varphi(e_4,e_4)=-e_2-e_3.
\end{align*} However, if $\beta$ is zero, then $\varphi$ is isomorphic to the law \begin{align*}
\varphi_4(e_1,e_1)=e_2, \quad \varphi_4(e_1,e_2)=e_3, \quad \varphi_4(e_2,e_4)&=e_3, \quad \varphi_4(e_4,e_4)=-e_2.
\end{align*}
\end{itemize}

Now assume $\alpha=0.$
\begin{itemize}
\item If $\gamma \neq 0,$ then two changes of basis yield to
\begin{equation*}
\varphi_5(e_1,e_1)=e_2, \quad \varphi_5(e_1,e_2)=e_3, \quad \varphi_5(e_2,e_4)=e_3.
\end{equation*}

\item If $\gamma =0,$ then $\beta$ must be non-zero because, otherwise, $e_4 \in Z(\varphi).$ In this case, the change of basis $x_4=\frac{1}{\sqrt{\beta}}e_4$ induces an isomorphism with
\begin{equation*}
\varphi_6(e_1,e_1)=e_2, \quad \varphi_6(e_1,e_2)=e_3, \quad \varphi_6(e_4,e_4)=e_3.
\end{equation*}
\end{itemize}

\smallskip

In the second case, that is, when $\dim Z(\varphi)=2,$ there are no free parameters in the lemma, so $\varphi$ is isomorphic to the Jordan algebra defined by the law
\begin{equation*}
\varphi_7(e_1,e_1)=e_2, \quad \varphi_7(e_1,e_2)=e_3.
\end{equation*}

\subsection{Characteristic sequence $s(\varphi)=(2,2)$}
\begin{lemma} \label{char.seq.2.2}
Let $(\mathbb{C}^4, \varphi)$ be a four-dimensional complex nilpotent Jordan algebra. If $s(\varphi)=(2,2),$ then $\dim Z(\varphi)=2.$
\end{lemma}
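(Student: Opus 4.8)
The plan is to exploit the defining feature of the characteristic sequence $s(\varphi)=(2,2)$: there exists a characteristic vector $e_1$ whose left-multiplication operator $L_{e_1}$ decomposes into two Jordan blocks of size $2$. First I would fix such a basis adapted to $L_{e_1}$. Since $L_{e_1}$ is nilpotent with two blocks of size $2$, I can choose $e_1$ together with vectors $e_2,e_3,e_4$ so that $\varphi(e_1,e_1)=e_2$, $\varphi(e_1,e_2)=0$, $\varphi(e_1,e_3)=e_4$, and $\varphi(e_1,e_4)=0$; here $\{e_2,e_4\}$ spans $\operatorname{Im}L_{e_1}$ and these are exactly the images of the two Jordan chains. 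This normal form is the analogue of equation (\ref{basis}) in Lemma \ref{char.seq.3.1}, and setting it up carefully is the natural starting point.

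The core of the argument is then to pin down the remaining products $\varphi(e_2,e_2),\varphi(e_2,e_3),\varphi(e_2,e_4),\varphi(e_3,e_3),\varphi(e_3,e_4),\varphi(e_4,e_4)$ using nilpotency and the Jordan identity, and to read off from these that both $e_2$ and $e_4$ (or suitable representatives) lie in the center. The key observation driving this is that the characteristic sequence is \emph{maximal} among all vectors outside $\mathcal{C}^2(J)$: if any product forced a longer Jordan chain, some vector would have $s_\varphi(x)$ lexicographically larger than $(2,2)$ — for instance $(3,1)$ or $(4)$ — contradicting maximality. So the strategy is to assume toward a contradiction that $\dim Z(\varphi)=1$ (we already know $\dim Z(\varphi)\geq 1$ and that $e_1\notin Z(\varphi)$), and to produce from that assumption a characteristic vector whose operator has a block of size at least $3$. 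This mirrors the change-of-basis trick used just after Lemma \ref{char.seq.3.1}, where the basis $\{x_1=e_1+e_2,\ldots\}$ exhibited $s_\varphi(x_1)=(4)$.

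Concretely, I would argue that $\operatorname{Im}L_{e_1}=\langle e_2,e_4\rangle$ must coincide with $\mathcal{C}^2(J)$, and that nilpotency forces $\mathcal{C}^3(J)=\{0\}$ for the sequence to be $(2,2)$ rather than $(3,1)$; this already makes $\langle e_2,e_4\rangle$ consist of square-zero directions for $L_{e_1}$. The task is to upgrade this to genuine centrality: if, say, $e_2$ failed to be central, then $\varphi(e_2,v)\neq 0$ for some $v$, and by combining $e_1$ with $v$ in an appropriate linear combination $x=e_1+\lambda v$ one manufactures a longer chain $x,\varphi(x,x),\varphi(x,\varphi(x,x)),\ldots$, again violating maximality of the sequence. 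Running this for each potential non-central product, and discharging the Jordan identity constraints along the way, shows every product among $e_2,e_3,e_4$ that could break the $(2,2)$ shape must vanish, leaving at least a two-dimensional center. I would expect the main obstacle to be the bookkeeping of the Jordan identity: one must evaluate it on several mixed pairs such as $(e_1,e_3)$, $(e_3,e_3)$, and $(e_1+e_3,e_1+e_3)$, and it is easy to overlook a constraint that is needed to rule out a stray parameter. The conceptual content — that any surviving noncentral product contradicts the maximality $s(\varphi)=(2,2)$ — is clean; the delicate part is verifying that the contradicting vector indeed lies outside $\mathcal{C}^2(J)$ so that it is a legitimate competitor in the definition of $s(\varphi)$.
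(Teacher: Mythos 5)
Your proposal takes essentially the same route as the paper's proof: the same adapted basis exhibiting the two size-$2$ Jordan chains of $L_{e_1}$, reduction of the remaining products via nilpotency and the Jordan identity under the assumption $\dim Z(\varphi)=1$, and a contradiction obtained by building from the surviving non-central product a vector outside $\mathcal{C}^2(J)$ with a chain of length $3$ — the paper's explicit witness being $x=\sqrt{1-b}\,e_1+e_3$, for which $s_\varphi(x)=(3,1)$ once $\varphi(e_2,e_3)=ae_4$ with $a\neq 0$ is forced. One small caution: your intermediate claim that nilpotency alone forces $\mathcal{C}^3(J)=\{0\}$ is not an independent input but is itself established by the same maximality argument, so it should be derived along the way rather than assumed.
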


\begin{proof}
If $s(\varphi)=(2,2),$ there exists a basis $\{e_1,e_2,e_3,e_4\}$ such that
\begin{equation*}
\varphi(e_1,e_1)=e_2, \quad \varphi(e_1,e_3)=e_4, \quad \varphi(e_1,e_2)=\varphi(e_1,e_4)=0
\end{equation*} Thus, $Z(\varphi)$ is one or two-dimensional. Assume that the center is spanned by $\{e_4\}.$ Nilpotency and Jordan identity show that
\begin{equation*}
\varphi(e_2,e_3)=ae_4, \quad \varphi(e_3,e_3)=be_2+ce_4,
\end{equation*} where $a$ is necessarily non-zero. But then, $s_\varphi(x)=(3,1)$ holds for $x=\sqrt{1-b}e_1+e_3.$ An analogous reasoning proves that $Z(\varphi)$ cannot be spanned by $\{e_2\}.$
\end{proof}

According to the lemma, $e_2,e_4 \in Z(\varphi),$ so the only unknown parameters are $b$ and $c.$ It is not difficult to prove that the nullity of the quantity $\delta=b+\frac{c^2}{4}$ is invariant under isomorphism. If $\delta$ is non-zero, a suitable change of basis yield to
\begin{equation*}
\varphi_8(e_1,e_1)=e_2, \quad \varphi_8(e_3,e_3)=e_4.
\end{equation*} Otherwise we obtain the Jordan algebra
\begin{equation*}
\varphi_9(e_1,e_1)=e_2, \quad \varphi_9(e_1,e_3)=e_4.
\end{equation*}

\subsection{Characteristic sequence $s(\varphi)=(2,1,1)$}
\begin{lemma}
Let $(\mathbb{C}^4, \varphi)$ be a four-dimensional complex nilpotent Jordan algebra with characteristic sequence $s(\varphi)=(2,1,1).$ Then the isomorphism class of $\varphi$ is determined by the dimension of the center. More precisely:
\begin{align*}
\dim Z(\varphi)=1 \quad \quad &\varphi_{10}(e_1,e_1)=e_2, \quad \varphi_{10}(e_3,e_4)=e_2. \\
\dim Z(\varphi)=2 \quad \quad &\varphi_{11}(e_1,e_1)=e_2, \quad \varphi_{11}(e_3,e_3)=e_2. \\
\dim Z(\varphi)=3 \quad \quad &\varphi_{12}(e_1,e_1)=e_2.
\end{align*}
\end{lemma}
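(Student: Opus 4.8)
The plan is to exploit the extremal nature of the characteristic sequence in order to linearize the problem into a classification of quadratic forms. First I would record a consequence of $s(\varphi)=(2,1,1)$: since this is the least upper bound of the sequences $s_\varphi(x)$, every $x\in J-\mathcal{C}^2(J)$ satisfies $s_\varphi(x)\leq (2,1,1)$, so the largest Jordan block of $L_x$ has size at most two and $L_x^2=0$. As $J-\mathcal{C}^2(J)$ is a nonempty Zariski-open (hence dense) subset and the entries of $L_x^2$ are quadratic polynomials in the coordinates of $x$, the identity $L_x^2=0$, that is $\varphi(x,\varphi(x,y))=0$, holds for all $x,y\in V$.

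Next I would polarize. Replacing $x$ by $x+z$ in $L_x^2=0$ and using $L_x^2=L_z^2=0$ gives $L_xL_z+L_zL_x=0$, i.e. $\varphi(x,\varphi(z,y))+\varphi(z,\varphi(x,y))=0$ for all $x,y,z$. Cycling this relation over the triple $(a,b,c)$ and using the symmetry of $\varphi$, one obtains $\varphi(c,\varphi(a,b))=-\varphi(c,\varphi(a,b))$, whence $\varphi(\varphi(a,b),c)=0$ in characteristic zero. Thus $\mathcal{C}^2(J)=\varphi(J,J)$ is contained in $Z(\varphi)$ and every triple product vanishes; in particular the Jordan identity holds automatically, so the only remaining content is the bilinear datum.

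Writing $W=\mathcal{C}^2(J)$ and $U=V/W$, the product descends to a symmetric bilinear map $\bar\varphi\colon U\times U\to W$ which is surjective by the definition of $W$. The hypothesis $s(\varphi)=(2,1,1)$ forbids any $L_x$ of rank $\geq 2$, since such a vector (using $L_x^2=0$) would have Jordan type $(2,2)$ and hence $s_\varphi(x)\geq(2,2)$; as the locus of rank $\leq 1$ is closed, every partial map $\bar\varphi(\bar x,-)\colon U\to W$ has rank at most one. I would then show this forces $\dim W=1$: the cases $\dim W=3,4$ fail surjectivity for dimension reasons, and $\dim W=\dim U=2$ is excluded by a short pencil argument, for if $\bar\varphi(u_1,u_1)=p$, $\bar\varphi(u_1,u_2)=q$, $\bar\varphi(u_2,u_2)=s$, the rank-one conditions on $L_{u_1}$, $L_{u_2}$ and $L_{u_1+tu_2}$ force $p,q,s$ to be collinear, contradicting surjectivity onto a plane.

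With $\dim W=1$ the map $\bar\varphi$ is a single nonzero quadratic form $q$ on $U\cong\mathbb{C}^3$ valued in $W\cong\mathbb{C}$. Since $W=\mathcal{C}^2(J)$ is canonical, every isomorphism preserves it and induces an element of $GL(U)\times GL(W)$, so $q$ is determined up to equivalence and scaling; over $\mathbb{C}$ such forms are classified by their rank $\in\{1,2,3\}$, giving exactly three classes. Finally I would read off the center from $\dim Z(\varphi)=\dim W+\dim\operatorname{rad}(q)=4-\operatorname{rank}(q)$, so that ranks $3,2,1$ correspond respectively to $\dim Z(\varphi)=1,2,3$, with representatives $u_1^2+u_2u_3$, $u_1^2+u_2^2$ and $u_1^2$, namely $\varphi_{10}$, $\varphi_{11}$ and $\varphi_{12}$. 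I expect the main obstacle to be the reduction itself, i.e. proving $\mathcal{C}^2(J)\subseteq Z(\varphi)$ and $\dim\mathcal{C}^2(J)=1$; once this is in hand, the classification is a routine diagonalization of a quadratic form over an algebraically closed field.
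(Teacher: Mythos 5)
Your proposal is correct, but it follows a genuinely different route from the paper's. The paper starts from a basis adapted to a characteristic vector ($\varphi(e_1,e_1)=e_2$, $\varphi(e_1,e_i)=0$ for $i=2,3,4$) and then splits into cases according to $\dim Z(\varphi)$, using nilpotency, the Jordan identity and explicit changes of basis to normalize the few remaining structure constants; in the $\dim Z(\varphi)=2$ case it kills the parameter $b$ by exhibiting a vector of type $(2,2)$, and the $\dim Z(\varphi)=3$ case is immediate. You instead extract the full strength of the hypothesis up front: Zariski density of $J-\mathcal{C}^2(J)$ gives $L_x^2=0$ for all $x$, polarization plus the cyclic identity $a(bc)=-b(ac)=c(ab)=-a(bc)$ (valid in characteristic zero) gives $\varphi(\varphi(a,b),c)=0$, so $\mathcal{C}^2(J)\subseteq Z(\varphi)$ and the Jordan identity becomes vacuous; then the rank-$\leq 1$ condition on the operators $L_x$, together with your pencil argument (note the case $q=0$ indeed requires $L_{u_1+u_2}$, which your pencil supplies) and the dimension count excluding $\dim \mathcal{C}^2(J)\in\{2,3,4\}$, pins down $\dim\mathcal{C}^2(J)=1$, reducing the problem to the rank classification of a nonzero quadratic form on $\mathbb{C}^3$ up to $GL(U)\times\mathbb{C}^*$, with $\dim Z(\varphi)=1+\dim\operatorname{rad}(q)=4-\operatorname{rank}(q)$ matching the three classes. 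All steps check out. What your route buys: it is basis-free until the final normal forms, it shows the Jordan identity is never actually needed (the hypothesis $s(\varphi)=(2,1,1)$ alone forces the commutative product to be two-step nilpotent), and it explains conceptually why $\dim Z(\varphi)$ is a complete invariant here, as an affine function of the rank of the classifying form; the paper's route buys uniformity with its treatment of the other characteristic sequences and produces the adapted bases directly. Two small points you should make explicit: $\dim\mathcal{C}^2(J)=0$ is excluded because $(2,1,1)\neq(1,1,1,1)$ forces a nonzero product, and once $\dim\mathcal{C}^2(J)=1$ the rank-$\leq 1$ condition on partial maps is automatic, so all three ranks of $q$ genuinely occur with characteristic sequence $(2,1,1)$, confirming that the three representatives are pairwise non-isomorphic algebras of the required type.
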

\begin{proof}
If $s(\varphi)=(2,1,1)$ there exists a basis $\{e_1,e_2,e_3,e_4\}$ for $\mathbb{C}^4$ such that
\begin{equation*}
\varphi(e_1,e_1)=e_2, \quad \varphi(e_1,e_i)=0, \quad i=2,3,4.
\end{equation*}

If the center is one-dimensional, an analogous argument to the one exposed in the proof of lemma \ref{char.seq.2.2} shows that $Z(\varphi)$ must be spanned by $\{e_2\}.$ Nilpotency and Jordan identity yield to
\begin{equation*}
\varphi(e_3,e_3)=ae_2, \quad \varphi(e_3,e_4)=be_3, \quad \varphi(e_4,e_4)=ce_2.
\end{equation*} Independently from the nullity of the parameters, there is a change of basis which induces an isomorphism with
\begin{equation*}
\varphi_{10}(e_1,e_1)=e_2, \quad \varphi_{10}(e_3,e_4)=e_2.
\end{equation*}

If $\dim Z(\varphi)=2,$ the center is spanned by $\{e_2,e_4\}$ and the only product to be determined is $\varphi(e_3,e_3)=ae_2+be_4.$ In fact, $b=0$, because otherwise we choose $\alpha \in \mathbb{C}$ such that $s_\varphi(\alpha e_1+e_3)=(2,2).$ Thus, $\varphi$ is isomorphic to
\begin{equation*}
\varphi_{11}(e_1,e_1)=e_2, \quad \varphi_{11}(e_3,e_3)=e_2.
\end{equation*}

When $\dim Z(\varphi)=3,$ it is obvious that $\varphi$ is isomorphic to $\varphi_{12}(e_1,e_1)=e_2.$ \qedhere
\end{proof}

\begin{theorem} [Classification of four-dimensional nilpotent Jordan algebras] Let $\varphi$ be a four-dimensional nilpotent complex Jordan
algebra. If $\varphi$ is not Abelian, then $\varphi$ is isomorphic to one of the following pairwise non-isomorphic algebras:

\begin{table}[ht]
\caption{}\label{eqtable}
\renewcommand\arraystretch{1}
\noindent\[
\begin{array}{cccccc}
\hline
\text{Isomorphism class} & & & s(\varphi) & \mathcal{O}(\varphi) & Z(\varphi) \\
\hline
\varphi_1(e_1,e_1)=e_2 & \varphi_1(e_1,e_2)=e_3 & \varphi_1(e_1,e_3)=e_4 & & & \\ \varphi_1(e_2,e_2)=e_4 & & & (4) & 12 & 1 \\
\hline
\varphi_2(e_1,e_1)=e_2 & \varphi_2(e_1,e_2)=e_3 & \varphi_2(e_4,e_4)=e_2 & (3,1) & 13 & 1\\
\hline
\varphi_3(e_1,e_1)=e_2 & \varphi_3(e_1,e_2)=e_3 & \varphi_3(e_2,e_4)=e_3 & & & \\
\varphi_3(e_4,e_4)=-e_2-e_3 & & & (3,1) & 12 & 1\\
\hline
\varphi_4(e_1,e_1)=e_2 & \varphi_4(e_1,e_2)=e_3 & \varphi_4(e_2,e_4)=e_3 & & & \\
\varphi_4(e_4,e_4)=-e_2 & & & (3,1) & 11 & 1\\
\hline
\varphi_5(e_1,e_1)=e_2 & \varphi_5(e_1,e_2)=e_3 & \varphi_5(e_2,e_4)=e_3 & (3,1) & 12 & 1 \\
\hline
\varphi_6(e_1,e_1)=e_2 & \varphi_6(e_1,e_2)=e_3 & \varphi_6(e_4,e_4)=e_3 & (3,1) & 11 & 1\\
\hline
\varphi_7(e_1,e_1)=e_2 & \varphi_7(e_1,e_2)=e_3 & & (3,1) & 10 & 2 \\
\hline
\varphi_8(e_1,e_1)=e_2 & \varphi_8(e_3,e_3)=e_4 & & (2,2) & 10 & 2 \\
\hline
\varphi_9(e_1,e_1)=e_2 & \varphi_9(e_1,e_3)=e_4 & & (2,2) & 9 & 2 \\
\hline
\varphi_{10}(e_1,e_1)=e_2 & \varphi_{10}(e_3,e_4)=e_2 & & (2,1,1) & 9 & 1 \\
\hline
\varphi_{11}(e_1,e_1)=e_2 & \varphi_{11}(e_3,e_3)=e_2 & & (2,1,1) & 8 & 2 \\
\hline
\varphi_{12}(e_1,e_1)=e_2 & & & (2,1,1) & 6 & 3\\
\hline
\end{array}
\]
\end{table}
\end{theorem}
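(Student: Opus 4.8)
The plan is to assemble the theorem from the four preceding subsections, each of which exhausts one value of the characteristic sequence, and then to verify that the resulting twelve laws are pairwise non-isomorphic and carry the tabulated invariants. First I would establish exhaustiveness. For every $x$ the operator $L_x$ is a nilpotent endomorphism of $\mathbb{C}^4$, so each $s_\varphi(x)$ is a partition of $4$; the algebra is non-Abelian precisely when $L_x\neq 0$ for some characteristic vector $x$, equivalently when the largest part of $s(\varphi)$ is at least $2$. The only partitions of $4$ with largest part $\geq 2$ are $(4),(3,1),(2,2)$ and $(2,1,1)$, so every non-Abelian law is covered. The existence of the normal forms is then exactly the content of the subsections: $\varphi_1$ for $s(\varphi)=(4)$; the laws $\varphi_2,\ldots,\varphi_7$ for $(3,1)$, via Lemma~\ref{char.seq.3.1} and the case analysis on the parameters $\alpha,\gamma,\beta$; the laws $\varphi_8,\varphi_9$ for $(2,2)$, via Lemma~\ref{char.seq.2.2} and the discriminant $\delta=b+\frac{c^2}{4}$; and $\varphi_{10},\varphi_{11},\varphi_{12}$ for $(2,1,1)$, via the final lemma.

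Next I would record the invariants. The characteristic sequence is fixed by the case, and $\dim Z(\varphi)$ was computed within each subsection. For the orbit dimension I would apply $\dim \mathcal{O}(\varphi)=n^2-\dim \mathrm{Der}(\varphi)$ with $n=4$, determining for each law the space of coboundaries $\delta_\varphi f$ exactly as in the three-dimensional example; these are twelve independent but routine linear computations, and the values obtained are consistent with the monotonicity of $\dim\mathcal{O}$ under contraction recorded in (\ref{ineq}).

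The substantive point is the pairwise non-isomorphism. The characteristic sequence splits the list into four blocks, and within a block I would first apply the pair $(\dim\mathcal{O}(\varphi),\dim Z(\varphi))$. These already separate the single member of the $(4)$ block, both members $\varphi_8,\varphi_9$ of the $(2,2)$ block, and all three of $\varphi_{10},\varphi_{11},\varphi_{12}$; inside the $(3,1)$ block they isolate $\varphi_2$ (orbit dimension $13$) and $\varphi_7$ (center dimension $2$). The \emph{hard part} is what remains: the pairs $\{\varphi_3,\varphi_5\}$ and $\{\varphi_4,\varphi_6\}$, each of which shares the full triple $(s(\varphi),\dim\mathcal{O}(\varphi),\dim Z(\varphi))$, so that no column of the table can tell them apart.

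To resolve these I would pass to the quotient $\varphi/Z(\varphi)$, whose isomorphism type is an invariant of $\varphi$ and which detects the nullity of the parameter $\alpha$ from the $(3,1)$ analysis. For all four laws the center is $\langle e_3\rangle$, so the quotient is three-dimensional and every product valued in $e_3$ is annihilated. In $\varphi_3$ and $\varphi_4$ one has $\alpha\neq 0$, so the two independent squares $\bar\varphi(e_1,e_1)=\bar e_2$ and $\bar\varphi(e_4,e_4)=-\bar e_2$ survive and the quotient has a one-dimensional center. In $\varphi_5$ and $\varphi_6$ one has $\alpha=0$, so only $\bar\varphi(e_1,e_1)=\bar e_2$ survives and the quotient has a two-dimensional center. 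Since $\dim Z(\varphi/Z(\varphi))$ equals $1$ for $\varphi_3,\varphi_4$ and $2$ for $\varphi_5,\varphi_6$, the two troublesome pairs are separated, the twelve laws are pairwise non-isomorphic, and the classification is complete.
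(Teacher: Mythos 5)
Your proposal is correct and follows essentially the same route as the paper, whose proof of this theorem is precisely the case-by-case analysis of the four subsections organized by the characteristic sequence $(4)$, $(3,1)$, $(2,2)$, $(2,1,1)$, with invariants $(s(\varphi),\dim\mathcal{O}(\varphi),\dim Z(\varphi))$ supplying the non-isomorphism. Your explicit computation of $\dim Z(\varphi/Z(\varphi))$ to separate $\{\varphi_3,\varphi_4\}$ from $\{\varphi_5,\varphi_6\}$ is exactly the device the paper invokes when it notes that the nullity of $\alpha$ is an isomorphism invariant ``considering the quotient $\varphi/Z(\varphi)$,'' and your values ($1$ for $\varphi_3,\varphi_4$; $2$ for $\varphi_5,\varphi_6$) check out.
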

\begin{remark}
With the notation of the theorem, the associative algebras in the variety of $\mathcal{J}^4$ are $\varphi_1$ and $\varphi_i$ for $i\geq 6.$
\end{remark}

\subsection{Irreducible components}

\begin{theorem}
The variety $\mathcal{J}^4$ is the union of two irreducible components
\begin{equation*}
\mathcal{J}^4=\overline{\mathcal{O}(\varphi_1)}^Z \cup \overline{\mathcal{O}(\varphi_2)}^Z.
\end{equation*}
\end{theorem}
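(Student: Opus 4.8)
The plan is to single out $\varphi_1$ and $\varphi_2$ as the two rigid algebras announced in the abstract, and then to show that every other isomorphism class lies in one of their orbit closures. The two rigidity statements rest on different invariants, and I would prove them separately. For $\varphi_1$ I would argue exactly as in the three-dimensional case: it is the unique class whose characteristic sequence attains the maximal value $(4)$, so by the first inequality in (\ref{ineq2}) any non-trivial deformation $\varphi$ of $\varphi_1$ would satisfy $s(\varphi)\geq(4)$, forcing $s(\varphi)=(4)$ and hence $\varphi\cong\varphi_1$; every deformation is therefore trivial and $\varphi_1$ is rigid. For $\varphi_2$ the characteristic sequence is not maximal, so I would instead use the orbit dimension: the classification table shows that $\dim\mathcal{O}(\varphi_2)=13$ is strictly the largest value occurring in $\mathcal{J}^4$, and by the second inequality in (\ref{ineq2}) a non-trivial deformation of $\varphi_2$ would have orbit dimension strictly greater than $13$, which is impossible. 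Hence $\varphi_2$ is rigid as well, and by the discussion preceding Proposition \ref{prop} each of $\overline{\mathcal{O}(\varphi_1)}^Z$ and $\overline{\mathcal{O}(\varphi_2)}^Z$ is an irreducible component of the variety.

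Before turning to the covering, I would check that these two components are genuinely distinct, neither contained in the other. Since $\varphi_1$ is associative and associativity is preserved under contraction, the closure $\overline{\mathcal{O}(\varphi_1)}^Z$ consists only of associative laws; as $\varphi_2$ is not associative (see the remark after the four-dimensional classification table), we get $\varphi_2\notin\overline{\mathcal{O}(\varphi_1)}^Z$. In the other direction a contraction $\varphi_2\to\varphi_1$ is ruled out by the first inequality in (\ref{ineq}), because $(4)=s(\varphi_1)\not\leq s(\varphi_2)=(3,1)$; thus $\varphi_1\notin\overline{\mathcal{O}(\varphi_2)}^Z$. The two orbit closures are therefore two different irreducible components, and it remains only to prove that their union is all of $\mathcal{J}^4$.

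The heart of the argument is the covering step: I would produce, for each of the classes $\varphi_3,\ldots,\varphi_{12}$ and for the Abelian algebra, a one-parameter family $f_t\in GL(4,\mathbb{C})$ realising it through (\ref{lim}) as a contraction of $\varphi_1$ or of $\varphi_2$, so that it falls into one of the two closures. The organising principle is associativity together with the characteristic sequence. The non-associative classes $\varphi_3,\varphi_4,\varphi_5$ cannot arise from the associative $\varphi_1$, so they must be placed in $\overline{\mathcal{O}(\varphi_2)}^Z$; I would obtain them, together with $\varphi_6$ and $\varphi_7$, by degenerating the parameters $\alpha,\beta,\gamma$ of the $(3,1)$-analysis, so that the whole characteristic-sequence-$(3,1)$ stratum is the closure of the orbit of its generic member $\varphi_2$. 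The remaining classes, of characteristic sequence $(2,2)$, $(2,1,1)$ and $(1,1,1,1)$, I would reach by truncation-type families; for instance $f_t(e_1)=te_1,\ f_t(e_2)=t^2e_2,\ f_t(e_3)=t^3e_3,\ f_t(e_4)=e_4$ contracts $\varphi_1$ onto $\varphi_7$ by annihilating every product valued in $\langle e_4\rangle$, and analogous rescalings descend to $\varphi_9,\varphi_{11},\varphi_{12}$ and finally to the Abelian law. Once each class is shown to lie in $\overline{\mathcal{O}(\varphi_1)}^Z\cup\overline{\mathcal{O}(\varphi_2)}^Z$, this union exhausts $\mathcal{J}^4$; being an irredundant union of two irreducible closed sets, it displays exactly the two irreducible components.

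I expect the main obstacle to lie in this covering step rather than in the two rigidity arguments. The delicate point is that a diagonal (monomial) family does not suffice for every class: the decomposable algebra $\varphi_8$, with two disjoint Jordan blocks of equal length, cannot be obtained from $\varphi_1$ by any diagonal rescaling, since such a family only preserves a subset of the monomial products of $\varphi_1$, none of which provides a second square disjoint from the first. To contract $\varphi_1$ onto $\varphi_8$ one must use a genuinely off-diagonal family in which an auxiliary parameter is tuned to cancel a coefficient that would otherwise diverge, and verifying that the limit is exactly $\varphi_8$, and not some coarser degeneration, is the most demanding computation. A secondary point needing care is the justification, used for the rigidity of $\varphi_2$, that the orbit dimensions recorded in the classification table are correct; these rest on the formula $\dim\mathcal{O}(\varphi)=n^2-\dim\mathrm{Der}(\varphi)$ of the Proposition, so the rigidity of $\varphi_2$ ultimately depends on that derivation count being exact.
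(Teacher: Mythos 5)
Your overall architecture is the paper's: you prove rigidity of $\varphi_1$ by uniqueness of the maximal characteristic sequence and rigidity of $\varphi_2$ by maximality of $\dim\mathcal{O}(\varphi_2)=13$ via (\ref{ineq2}), exactly as the paper does, and your covering strategy is also the paper's. Your handling of the $(3,1)$ stratum by specializing the parameters $\alpha,\beta,\gamma$ of the classification is essentially the paper's argument via the linear deformations $\varphi_3+t\mu_1$, $\varphi_4+t\mu_2$, etc. (one caveat: the stratum is \emph{contained in} $\overline{\mathcal{O}(\varphi_2)}^Z$, not equal to it), your family for $\varphi_1\to\varphi_7$ is literally the one in the paper, and your explicit check that neither component contains the other is a sound addition that the paper leaves implicit.

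The one point where you genuinely diverge is $\varphi_8$, and it is exactly the step you defer. The paper does \emph{not} contract $\varphi_1$ onto $\varphi_8$: it asserts, citing \cite{Ma}, that $\varphi_8$ is not a contraction of $\varphi_1$ (and, by transitivity through $\varphi_6\to\varphi_8$, that $\varphi_6$ is not either), and instead reaches both from $\varphi_2$, e.g. $\varphi_6\to\varphi_8$ by $f_t(e_1)=te_1$, $f_t(e_2)=t^2e_2$, $f_t(e_3)=e_4$, $f_t(e_4)=e_3$. Remarkably, your bet is right and the paper's negative claim is wrong: identifying $\varphi_1$ with $t\mathbb{C}[t]/(t^5)$, take $f_t(e_1)=e_1$, $f_t(e_2)=e_2$, $f_t(e_3)=te_1+t^{-1}e_2+t^{-2}e_3+t^{-3}e_4$ and $f_t(e_4)=\varphi_1(f_t(e_3),f_t(e_3))=t^2e_2+2e_3+(2t^{-1}+t^{-2})e_4$ (one checks $\det f_t=t^{-4}\neq 0$). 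Transporting the products, every pair converges; the limit law is $\psi(e_1,e_1)=e_2$, $\psi(e_1,e_3)=-e_2$, $\psi(e_3,e_3)=e_4$, which is isomorphic to $\varphi_8$ via $x_1=e_1$, $x_2=e_2$, $x_3=e_1+e_3$, $x_4=e_4-e_2$. (This is consistent with Brian\c{c}on's irreducibility of the punctual Hilbert scheme of the plane: the colength-$5$ ideal $(x^3,xy,y^3)$ is a limit of curvilinear ones.) So your route can be completed, and it exposes an error in the paper's side remark --- though not in the theorem, since the paper independently shows $\varphi_8\in\overline{\mathcal{O}(\varphi_2)}^Z$. Still, as a proof your proposal is incomplete precisely here: you announce the family without producing it, and the economical completion remains the paper's routing through $\varphi_2$.

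Two smaller gaps in the covering step: you never assign $\varphi_{10}$ to a component (the paper contracts $\varphi_1\to\varphi_{10}$ by the monomial family $f_t(e_1)=te_2$, $f_t(e_2)=t^2e_4$, $f_t(e_3)=te_1$, $f_t(e_4)=te_3$); and ``analogous rescalings'' cannot yield $\varphi_{11}$, since a monomial family applied to $\varphi_1$ (or to $\varphi_9$) can only retain a subset of the conjugated monomial products, and $\varphi_1$ has no two basis squares with the same value ($e_1^2=e_2$, $e_2^2=e_4$), whereas $\varphi_{11}$ needs $x_1^2=x_3^2=x_2$. One genuinely non-monomial step is required, as in the paper's $\varphi_9\to\varphi_{11}$ with $f_t(e_1)=e_1+e_3$, $f_t(e_2)=e_2+2e_4$, $f_t(e_3)=t(e_1+e_3)$, $f_t(e_4)=t^2(e_2+2e_4)$.
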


\begin{proof}
From the inequalities (\ref{ineq2}), it is straightforward to prove the rigidity of the Jordan algebras $\varphi_1$ and $\varphi_2.$ In effect, $\varphi_1$ is the only algebra with maximal characteristic sequence, so every deformation must be equivalent to $\varphi_1.$ In the same way, the rigidity of $\varphi_2$ follows from the fact that $\dim \mathcal{O}(\varphi_2)$ takes the biggest value of the table. Thus, $\overline{\mathcal{O}(\varphi_1)}^Z$ and $\overline{\mathcal{O}(\varphi_2)}^Z$ are irreducible components of $\mathcal{J}^4.$

We claim that $\varphi_7,$ $\varphi_9,$ $\varphi_{10},$ $\varphi_{11},$ $\varphi_{12}$ belong to the Zariski closure of $\mathcal{O}(\varphi_1).$ Since $\varphi_1$ is associative, so are all the closure points of $\mathcal{O}(\varphi_1).$ Thus, $\varphi_2,$ $\varphi_3,$ $\varphi_4$ and $\varphi_5$ are not contractions of $\varphi_1.$ The study for nilpotent associative algebras accomplished in reference \cite{Ma} shows that $\varphi_8$ cannot be deformed over algebras of larger nilindex, so $\varphi_8$ is not a contraction of $\varphi_1.$ Neither is $\varphi_6,$ because of the transitivity established by proposition \ref{prop}. In effect, $\varphi_6 \longrightarrow \varphi_8$ by the automorphisms
\begin{equation*}
f_t(e_1)=te_1 \quad f_t(e_2)=t^2e_2 \quad f_t(e_3)=e_4 \quad f_t(e_4)=e_3.
\end{equation*} so if $\varphi_6$ were a contraction of $\varphi_1,$ so would be $\varphi_8.$ The remaining contractions are given by the following families of linear transformations or by composition of them:
\begin{align*}
\varphi_1 \longrightarrow \varphi_7: \quad &f_t(e_1)=te_1, \quad f_t(e_2)=t^2e_2, \quad f_t(e_3)=t^3e_3, \quad f_t(e_4)=e_4. \\
\varphi_1 \longrightarrow \varphi_9: \quad &f_t(e_1)=te_1, \quad f_t(e_2)=t^2e_2, \quad f_t(e_3)=te_3, \quad f_t(e_4)=t^2e_4. \\
\varphi_1 \longrightarrow \varphi_{10}: \quad &f_t(e_1)=te_2, \quad f_t(e_2)=t^2e_4, \quad f_t(e_3)=te_1, \quad f_t(e_4)=te_3. \\
\varphi_9 \longrightarrow \varphi_{11}: \quad &f_t(e_1)=e_1+e_3, \quad f_t(e_2)=e_2+2e_4, \quad f_t(e_3)=te_1+te_3, \\ \quad &f_t(e_4)=t^2e_2+2t^2e_4. \\
\varphi_{10} \longrightarrow \varphi_{11}: \quad &f_t(e_1)=e_1, \quad f_t(e_2)=e_2, \quad f_t(e_3)=e_3, \quad f_t(e_4)=te_4.\\
\varphi_{11} \longrightarrow \varphi_{12}: \quad &f_t(e_1)=e_1, \quad f_t(e_2)=e_2, \quad f_t(e_3)=te_3, \quad f_t(e_4)=e_4. \\
\end{align*}
Now let us show that $\varphi_3,$ $\varphi_4,$ $\varphi_5,$ $\varphi_6,$ $\varphi_8$ are contractions of $\varphi_2.$ We first note that we have the linear deformations
\begin{equation*}
\varphi_3+t\mu_1, \quad \varphi_4+t\mu_2, \quad \varphi_6+t\mu_1, \quad \varphi_4+t\mu_2
\end{equation*} where $\mu_1$ and $\mu_2$ are symmetric bilinear mappings with non-zero products $\mu_1(e_2,e_4)=e_3$ and $\mu_2(e_4,e_4)=e_3$ respectively. This way, $\varphi_2$ is a deformation of $\varphi_3,$ $\varphi_4$ may be deformed over $\varphi_3$ and $\varphi_5$ is a deformation of both $\varphi_4$ and $\varphi_6.$ It follows that $\varphi_3, \varphi_4, \varphi_5, \varphi_6$ are closure points of $\mathcal{O}(\varphi_2).$

The remaining contractions in the variety $\mathcal{J}^4$ are given by the following families of automorphisms or by composition of them:
\begin{align*}
\varphi_2 \longrightarrow \varphi_5: \quad &f_t(e_1)=te_1+e_4, \quad f_t(e_2)=(t^2+1)e_2, \quad f_t(e_3)=t(t^2+1)e_3, \\
\quad &f_t(e_4)=te_1+te_4. \\
\varphi_3 \longrightarrow \varphi_6: \quad &f_t(e_1)=te_1, \quad f_t(e_2)=t^2e_2, \quad f_t(e_3)=t^3e_3, \quad f_t(e_4)=it^{\frac{3}{3}}e_4. \\
\varphi_4 \longrightarrow \varphi_{7}: \quad &f_t(e_1)=e_1, \quad f_t(e_2)=e_2, \quad f_t(e_3)=e_3, \quad f_t(e_4)=te_4. \\
\varphi_4 \longrightarrow \varphi_{8}: \quad &f_t(e_1)=te_1+te_2, \quad f_t(e_2)=t^2e_3, \quad f_t(e_3)=it^{\frac{3}{2}}e_4, \quad f_t(e_4)=t^3e_4. \\
\varphi_4 \longrightarrow \varphi_{10}: \quad &f_t(e_1)=te_1, \quad f_t(e_2)=t^2e_2, \quad f_t(e_3)=t^2e_1+e_3, \quad f_t(e_4)=ite_4. \\
\varphi_6 \longrightarrow \varphi_7: \quad &f_t(e_1)=e_1, \quad f_t(e_2)=e_2, \quad f_t(e_3)=e_3, \quad f_t(e_4)=te_4. \\
\varphi_6 \longrightarrow \varphi_8: \quad &f_t(e_1)=te_1, \quad f_t(e_2)=t^2e_2, \quad f_t(e_3)=e_4, \quad f_t(e_4)=e_3. \\
\varphi_6 \longrightarrow \varphi_{10}: \quad &f_t(e_1)=te_4, \quad f_t(e_2)=t^2e_3, \quad f_t(e_3)=t^2e_1, \quad f_t(e_4)=e_2. \\
\varphi_{7} \longrightarrow \varphi_{9}: \quad &f_t(e_1)=te_1, \quad f_t(e_2)=t^2e_2, \quad f_t(e_3)=t^2e_1+e_2+e_4, \quad f_t(e_4)=te_3.\\
\varphi_8 \longrightarrow \varphi_{9}: \quad &f_t(e_1)=e_1+te_3, \quad f_t(e_2)=e_2+t^2e_4, \quad f_t(e_3)=t^2e_3, \quad f_t(e_4)=t^3e_4.
\end{align*} This completes the geometric description of the variety.
\end{proof}

\begin{corollary}
All contractions in the variety of four-dimensional nilpotent
Jordan algebras are represented by the arrows in the following
diagram, where $\varphi_{13}$ stands for the Abelian algebra, or
by composition of them.
\end{corollary}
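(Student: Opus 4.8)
The plan is to prove the corollary in two complementary halves. For \emph{soundness}, that every displayed arrow and every composition of displayed arrows is a contraction, nothing beyond the previous theorem is needed: the explicit one-parameter families of linear maps exhibited there realize each individual arrow, the relation $\longrightarrow$ is transitive by Proposition \ref{prop}, and the single universal arrow onto $\varphi_{13}$ is the contraction over the Abelian algebra induced by $x_i=te_i$ recalled in Section 1. The genuine content of the statement is therefore the \emph{completeness} assertion, namely that no contraction $\varphi_i\longrightarrow\varphi_j$ exists outside the partial order generated by the displayed arrows.

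I would treat completeness as a finite verification over the ordered pairs $(\varphi_i,\varphi_j)$ not joined by a directed path, discarding each one by a necessary condition read directly from the classification table. The first is the characteristic-sequence inequality of (\ref{ineq}): since the values occurring are totally ordered, $(4)>(3,1)>(2,2)>(2,1,1)$, every arrow that would raise $s(\varphi)$ is excluded at once. The second is the strict drop of $\dim\mathcal{O}(\varphi)$ forced by any nontrivial contraction, which kills all pairs of non-isomorphic algebras sharing an orbit dimension, such as $\{\varphi_3,\varphi_5\}$, $\{\varphi_4,\varphi_6\}$, $\{\varphi_7,\varphi_8\}$ and $\{\varphi_9,\varphi_{10}\}$, together with every pair along which $\dim\mathcal{O}$ would increase. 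The third is the inequality $\dim Z(\varphi)\geq\dim Z(\varphi_0)$, which forbids any contraction landing on a one-dimensional center from a two-dimensional one, and so separates $\varphi_{10}$ from $\varphi_7$, $\varphi_8$ and $\varphi_9$. The fourth is the preservation of associativity, which forbids every arrow from an associative algebra ($\varphi_1$ and the $\varphi_i$ with $i\geq 6$) to a non-associative one ($\varphi_2,\varphi_3,\varphi_4,\varphi_5$); its one essential use, where the numerical data alone leave the arrow open, is to exclude $\varphi_1\longrightarrow\varphi_4$.

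The main obstacle is the single pair that survives all four of these conditions: $\varphi_1\longrightarrow\varphi_8$. Here both algebras are associative, the characteristic sequence drops from $(4)$ to $(2,2)$, the orbit dimension drops from $12$ to $10$, and the center dimension rises from $1$ to $2$, so every formal invariant is compatible with a contraction that nevertheless does not occur. This is exactly where the argument must leave the purely numerical framework, and I would appeal to the structure result for nilpotent associative algebras of \cite{Ma}, to the effect that $\varphi_8$ admits no deformation onto an algebra of strictly larger nilindex; since $\varphi_1$ has strictly larger nilindex, it cannot contract onto $\varphi_8$. Once $\varphi_1\not\longrightarrow\varphi_8$ is secured, the remaining pair $\varphi_1\longrightarrow\varphi_6$, which is likewise compatible with all four invariants, closes by transitivity: the diagram records $\varphi_6\longrightarrow\varphi_8$, so a contraction $\varphi_1\longrightarrow\varphi_6$ would force $\varphi_1\longrightarrow\varphi_8$ through Proposition \ref{prop}, a contradiction. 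A short check confirms that $\varphi_1\longrightarrow\varphi_8$ and $\varphi_1\longrightarrow\varphi_6$ are the only pairs not already settled by the four invariants, so with these two exceptions handled the transitive closure of the displayed arrows exhausts the contraction relation, which is the assertion of the corollary.
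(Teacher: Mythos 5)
Your proposal is correct and takes essentially the same route as the paper: the arrows are the explicit families (and linear deformations) from the proof of the irreducible-components theorem, the nontrivial exclusions are exactly the paper's --- preservation of associativity for $\varphi_1 \longrightarrow \varphi_2,\ldots,\varphi_5$, the nilindex result of \cite{Ma} for $\varphi_1 \longrightarrow \varphi_8$, and transitivity through $\varphi_6 \longrightarrow \varphi_8$ via Proposition \ref{prop} for $\varphi_1 \longrightarrow \varphi_6$ --- with the remaining pairs ruled out by the invariants (\ref{ineq}). Your only departure is to carry out explicitly the finite invariant check that the paper leaves implicit (correctly noting, in passing, that associativity is essential only for $\varphi_1 \longrightarrow \varphi_4$, the other non-associative targets being already excluded by orbit dimensions), which is a welcome clarification rather than a different method.
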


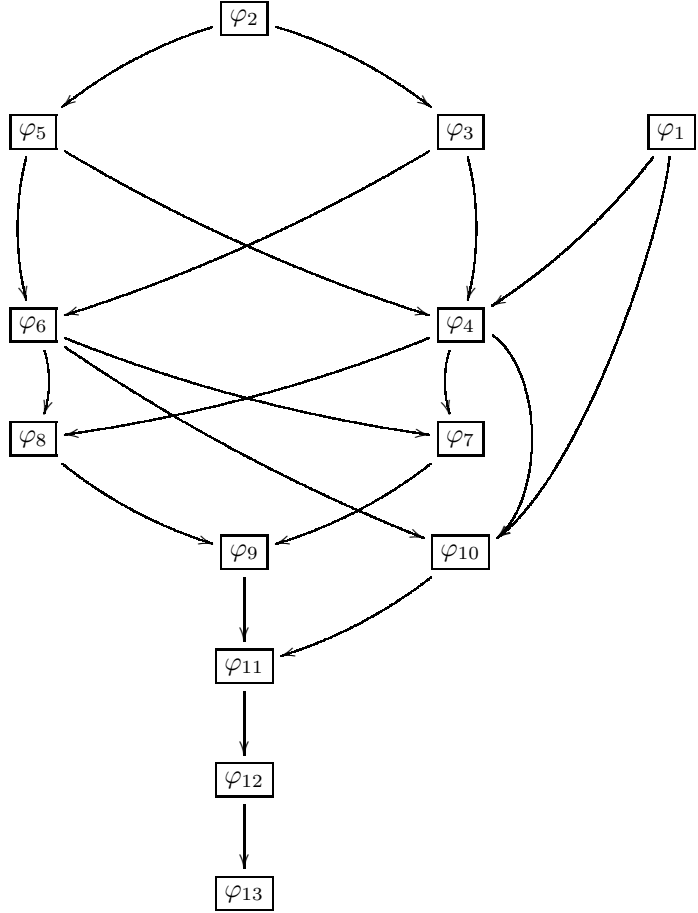
\begin{figure}[tb]
\caption{Contractions in $\mathcal{J}^4.$} \label{firstfig}
\medskip
$\xymatrix{
&&                                      && \ar@/_/[lld] \boxed{\varphi_2} \ar@/^/[rrd]\\
&& \boxed{\varphi_5} \ar@/_/[rrrrdd]\ar@/_/[dd] && && \boxed{\varphi_3}\ar@/^/[dd]\ar@/^/[lllldd]   && \boxed{\varphi_1} \ar@/^/[ddll]\ar@(d,r)[ddddll]\\\\
&& \boxed{\varphi_6} \ar@/_/[rrrrdd] \ar@/^/[d] \ar@/_/[rrrrd] &&      && \boxed{\varphi_4} \ar@/_/[d] \ar@/^/[lllld] \ar@(r,r)[dd]\\
&& \boxed{\varphi_8}     \ar@/_/[drr]            &&                          && \boxed{\varphi_7} \ar@/^/[dll]&&\\
&&                 &&   \boxed{\varphi_9}    \ar@//[d]  && \boxed{\varphi_{10}} \ar@/^/[dll]&& &&\\
&&                 &&   \boxed{\varphi_{11}} \ar@//[d]\\
&&                 &&   \boxed{\varphi_{12}} \ar@//[d]\\
&&                 &&   \boxed{\varphi_{13}}}$
\end{figure}

\bigskip

\bibliographystyle{amsplain}

\end{document}